\documentclass[12pt, twoside, english, headsepline]{article}

\usepackage[margin=30mm]{geometry}
\usepackage{tikz,graphicx}
\usetikzlibrary{arrows}
\usetikzlibrary{patterns}

\tikzset{
  mid arrow/.style={postaction={decorate,decoration={
        markings,
        mark=at position .5 with {\arrow[#1]{stealth}}
      }}},
}
\usetikzlibrary{decorations.markings} % per colorare le regioni di un grafico
\tikzset
 {every pin/.style = {pin edge = {<-}}, 
  > = stealth, 
  flow/.style = 
   {decoration = {markings, mark=at position #1 with {\arrow{>}}},
    postaction = {decorate}
   },
  flow/.default = 0.5,   
  main/.style = {line width=1pt}
 }

\usepackage{amsmath,amsthm,amssymb,amsfonts, mathtools, amsbsy, dsfont}
\usepackage[utf8]{inputenx}
\usepackage{xcolor}
\usepackage{microtype}
\usepackage{hyperref}
\hypersetup{linkcolor=blue}

\newcommand*\dif{\mathop{}\!\mathrm{d}}  % simbolo differenziale
\newcommand{\conv}{\mathop{\scalebox{2.8}{\raisebox{-0.2ex}{$\ast$}}}} % convoluzione grande (per fare convoluzioni multipler)
\newcommand{\sconv}{\mathop{\scalebox{2.0}{\raisebox{-0.2ex}{$\ast$}}}} % convoluzione grande (per fare convoluzioni multipler)
\newcommand\sbullet[1][.5]{\mathbin{\vcenter{\hbox{\scalebox{#1}{$\bullet$}}}}} % punto 
\allowdisplaybreaks % fa spezzare le formule ``equation'' tra le pagine

\newtheorem{theorem}{Theorem}[section] %[theorem]
\newtheorem{proposition}{Proposition}[section] %[theorem]
\newtheorem{lemma}[theorem]{Lemma}
\theoremstyle{definition} % plain
\newtheorem{definition}{Definition}[section] 
\newtheorem{example}{Example}[section]
\newtheorem{remark}{Remark}[section] %[theorem]
\numberwithin{equation}{section} % equazioni con numero interno alla sezione

\providecommand{\keywords}[1]
{
  \small	
  \textbf{\textit{Keywords: }} #1
}
\providecommand{\MSC}[1]
{
  \small	
  \textit{2020 MSC: } #1   
}

\title{Integrals of some compound processes}%Compound processes and their integrals
\author{Fabrizio Cinque$^1$ and Enzo Orsingher$^2$\\
        \small Department of Statistical Sciences, Sapienza University of Rome, Italy \\
        \small $^1$cinque.fabrizio@gmail.com $^2$enzo.orsingher@uniroma1.it
}

\begin{document}

\maketitle

\begin{abstract}
%% Text of abstract
By means of the theory of compound (non-homogenoeus) Poisson processes we define a general framework which includes most of the generalizations of the Poisson processes, both in the Skellam sense and in the space-fractional sense. We prove that Bernstein subordination is the only time-changing leading to a compound Poisson process. We also consider the case of subordination with inverse Bernstein subordinators. Then, we focus on the integrals of compound processes. We give an explicit representation of the fractional integral of compound Poisson process, showing that for fixed $t$ it is distributed as a compound Poisson random variable. We then extend this result to more general integral forms. Furthermore, we obtain some limit results, explicit forms of the iterated integrals and their governing equation. Finally, we study the integral of compound renewal processes in the Fourier-Laplace domain.
\end{abstract} \hspace{10pt}

\keywords{Non-homogeneous Poisson processes; Compound renewal processes; Integral of stochastic processes; Bernstein functions; Convolution-type derivatives}

\MSC{Primary 60G51, 60G55; Secondary 60G22}

%33C10 Bessel and Airy functions, cylinder functions
% 34A05 Explicit solutions, first integrals of ordinary differential equations
%35K25 Higher-order parabolic equations
% 60G55 Point processes
% 60G22 fractional processes, including Brownian motion
% 60G51 processes with independent increments< Levy processes

% ------  CORPO  ---------------------------------------------------------------------------------------

\section{Introduction}

The Poisson process has been generalized in many directions in the recent decades. One first step was obtained by replacing the time-derivative with the time-fractional derivative in the governing equation \cite{L2003}, which first introduced a fractional version of a point process. Another (equivalent) generalization was the replacement of the intertimes between successive events with Mittag-Leffler distributions \cite{MGS2004, MNV2011}. A completely different variation was obtained by acting on the shift space operator in the governing equation \cite{OP2012}, leading to the so-called space-fractional Poisson process and to its generalizations by means of Bernstein subordinators \cite{OT2015}. Another approach was by introducing the generalized counting process where multiple arrivals are permitted \cite{P1984} and by then applying a time-fractional operator to the governing equation \cite{DcMM2016}. Some authors also studied convolutional-type generalization \cite{KM2021}. Parallel to these extensions of the Poisson process,in the literature appeared generalizations of the Skellam process (introduced in \cite{S1946}), obtained by means of the difference of various types of the above Poisson-type processes \cite{BS2024, CO202, KK2024, KV2019, MV2019}.

The first part of this work aims to describe these processes in a more general framework based on the theory of compound Poisson processes, thus assuming jumps of arbitrary real-valued amplitude. Some particular cases of this approach already appeared in the literature, see for instance \cite{BM2012, BM2014, DcMZ2015, KK2023, S2013}. In particular, we study processes (say $M$) with independent increments and having the following infinitesimal behaviour, for $t\ge0, x\in\mathbb{R}$,
\begin{align*}\label{compoundPoissonInfinitesimaleIntroduzione}
	P\{M[t,t+\dif t) \in \dif x\} = \begin{cases}
		\begin{array}{l l}
			\lambda(t)\dif t F(\dif x) + o(\dif t), & x\in\mathbb{R}\setminus\{0\},\\
			1-\lambda(t)\dif t + \lambda(t)\dif t F(\dif 0) + o(\dif t), & x = 0,
		\end{array}
	\end{cases} 
\end{align*}
which prove to be compound (non-homogeneous) Poisson processes. 
%These are among the most popular processes having good theoretical properties and a wide range of possible applications in several different fields.

In the second part of the work we focus on the integral of compound processes (say $M$) of the form $\int_0^t M(s) f(s) \dif s$, $t\ge0$, where $f$ is a suitable real function; in particular we deal with the Poisson and the renewal cases (we refer to \cite{B2022} for compound renewal processes and their asymptotics). For instance, with $t\ge0$ and $f(s) = (t-s)^\alpha,\ \alpha\ge0$, we are able to represent the fractional integral of the compound homogeneous Poisson process $M(t) = \sum_{k=1}^{N(t)}X_k, t\ge0,$ as a compound Poisson random variable,
 $$\int_0^t M(s)(t-s)^\alpha \dif s \stackrel{d}{=} \frac{t^{\alpha+1}}{\alpha+1} \sum_{k=1}^{N(t)} X_k U_k^{\alpha+1},\ \ \ t\ge0,\ \alpha\ge0,$$
 where $U_k\sim Unif(0,1)$ are i.i.d. r.v.s independent from the other random elements.
 
  In the literature, the integration of point processes has been investigated over the years because of its wide applications. Indeed, these integrals are meaningful for inventory and queuing systems and, in general, they can represent the total "cost" of the "elements" released in a time interval by the "subjects" appearing during that interval. For instance, in an enviromental traffic analysis it may represent the total amount of emissions by the cars arriving and stopping at a traffic light; in a biological study, they can represent the food consumption of bacterias or animals arrived in an area and randomly reproducing in a precise time interval. Early works about these processes (and applications) are \cite{GMn1971, Mn1970, N1966, P1966, P1968}, about birth-death process, and \cite{W1971}, about compound Poisson processes. We refer to \cite{N1966} for a discussion on the applications and the study of the moments of integrated Poisson processes. Recently, different authors devoted their research to this type of integral processes, focusing on some path-wise properties \cite{PC2026, PS2002}, on the renewal processes \cite {SVw2007}, on the fractional integrals \cite{CO202, OP2013}, on the Skellam process \cite{X2018}, introducing random times \cite{VK2024}. 
\\

The paper is organized as follows. In the second section we recall some properties of the compound processes and the equality in law in the space of the trajectories of the stochastic processes. In Section 3 we focus on the non-homogeneous Poisson case, showing the connection to the infinitesimal definition of certain point processes. In the homogeneous case we show that this class of processes is closed with respect to the composition with Bernstein subordinators (which produces the so-called Bernstein fractional counting processes \cite{OT2015}). We also briefly study the behaviour of the time-changing with the inverse of a Bernstein subordinator which modifies the compound Poisson process into a compound renewal and we provide the related governing equation.

After some basics on the integral of a stochastic processes, the last section 	of the paper focuses on the integral of compound processes, both Poisson and renewal type. We begin with the analysis of the fractional integral of a compound Poisson process and then extend the result to more general integral forms, proving that for fixed $t$, the integral is distributed as a compound Poisson random variable. We also derive some limit results, the explicit form of the iterated integrals and the equation governing their joint distribution. Then, we include the derivation of the explicit probability law for the integral of Skellam-type processes. At last, we provide some results on the integral of compound renewal processes in the Fourier-Laplace domain.

\section{Preliminaries in compound processes}

Let $(\Omega, \mathcal{F}, \{\mathcal{F}\}_t, P)$ be a filtered probaility space where all the random elements will be defined and  $\mathcal{D}[0,\infty)$ the space of the real c\`{a}dl\`{a}g functions endowed with the Skorokhod topology.

We begin by recalling some known results which will be useful in the rest of the paper.
\begin{lemma}\label{lemmaUguaglianzaProcessiSpazioFunzioniCadlag}
	Let $X, Y$ two stochastic processes on $\mathcal{D}[0,\infty)$. Assume that $X(t)\stackrel{d}{=}Y(t), \ t\ge0$. 
	\begin{itemize}
		\item[($i$)] If both $X$ and $Y$ have independent and stationary increments, then $X\stackrel{d}{=}Y$ on $\mathcal{D}[0,\infty)$.
		\item[($ii$)] If both $X$ and $Y$ have independent increments and $X(t)-X(s)\stackrel{d}{=}Y(t)-Y(s), \ t>s\ge0$, then $X\stackrel{d}{=}Y$ on $\mathcal{D}[0,\infty)$.
	\end{itemize} 
\end{lemma}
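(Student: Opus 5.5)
The plan is to reduce equality in law on $\mathcal{D}[0,\infty)$ to equality of finite-dimensional distributions. Once that is done, the conclusion follows from a standard fact. The Borel $\sigma$-algebra of $\mathcal{D}[0,\infty)$ with the Skorokhod topology coincides with the $\sigma$-algebra generated by the coordinate projections $\pi_t(\omega)=\omega(t)$, $t\ge0$. The cylinder sets $\{\omega: (\omega(t_1),\dots,\omega(t_n))\in B\}$ form a $\pi$-system generating it. Hence, by Dynkin's $\pi$-$\lambda$ theorem, two laws on $\mathcal{D}[0,\infty)$ with the same finite-dimensional distributions coincide. It therefore suffices to show that, for every $0\le t_1<\dots<t_n$, the vectors $(X(t_1),\dots,X(t_n))$ and $(Y(t_1),\dots,Y(t_n))$ have the same law.

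I would prove ($ii$) first and then deduce ($i$) from it. Set $t_0=0$ and consider the increments $\Delta^X_j=X(t_j)-X(t_{j-1})$, $j=1,\dots,n$, together with $X(0)$. By independence of increments (understood, as usual, to include independence from $X(0)$; if $X(0)=Y(0)=0$ this is automatic), the vector $(X(0),\Delta^X_1,\dots,\Delta^X_n)$ has independent components. The law of each component is determined by the hypotheses: $X(0)\stackrel{d}{=}Y(0)$, and $\Delta^X_j\stackrel{d}{=}\Delta^Y_j$. So the joint law of $(X(0),\Delta^X_1,\dots,\Delta^X_n)$ is the product measure and equals that of the analogous $Y$-vector. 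Its characteristic function is
\begin{equation*}
E\,e^{i\langle u, (X(0),\Delta^X)\rangle}=E e^{iu_0X(0)}\prod_{j=1}^n E e^{iu_j\Delta^X_j},
\end{equation*}
which is the same for $Y$. Finally, $(X(t_1),\dots,X(t_n))$ is obtained from this vector by the linear map of partial sums $X(t_k)=X(0)+\sum_{j\le k}\Delta^X_j$, so the finite-dimensional laws agree.

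For ($i$), stationarity gives $X(t)-X(s)\stackrel{d}{=}X(t-s)-X(0)$, and likewise for $Y$. I then need $X(t-s)-X(0)\stackrel{d}{=}Y(t-s)-Y(0)$. This follows from independence of $X(0)$ and $X(t-s)-X(0)$: the characteristic function of $X(t-s)$ factors, and dividing by that of $X(0)$ identifies the law of the increment. Here one must be careful, because the division is only valid where $E e^{iuX(0)}\neq0$. The main obstacle is this step. In the intended setting $X(0)=Y(0)=0$, or at least $X(0)$ is degenerate, so the increment law is simply the law of $X(t-s)$ and ($ii$) applies. Otherwise I would state that convention explicitly; alternatively, I would note that for L\'evy-type processes the one-dimensional marginal at time $t-s$ determines the infinitely divisible increment law, since its characteristic function never vanishes.
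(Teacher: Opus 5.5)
Your proof is correct under the normalization $X(0)=Y(0)=0$, which holds for every process to which the paper applies the lemma. It is the standard argument: the Borel $\sigma$-algebra of $\mathcal{D}[0,\infty)$ is generated by the projections, so finite-dimensional laws determine the law, and independent increments with prescribed marginals pin those laws down. The paper itself gives no proof and only recalls the lemma as a known result.

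You should drop your closing ``alternatively'' remark, however. Infinite divisibility cannot replace that normalization in ($i$), because the obstruction is the zero set of $\mathbb{E}e^{i\gamma X(0)}$, not that of the increment's characteristic function. In fact ($i$) is false without the normalization. Here is a counterexample:
\begin{itemize}
\item[] Take $V,V',L_1,L_2$ independent, where $V\stackrel{d}{=}V'$ have characteristic function $(1-|\gamma|)^+$.
\item[] Let $L_1,L_2$ be rate-one compound Poisson processes. Give $L_1$ jump characteristic function $(1-|\gamma|)^+$, and give $L_2$ its $2$-periodic extension, which is the characteristic function of a lattice law on $\pi\mathbb{Z}$. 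These two jump characteristic functions coincide on $[-1,1]$.
\item[] Set $X=V+L_1$ and $Y=V'+L_2$. Both have stationary increments that are independent of each other and of the initial value.
\item[] For $|\gamma|<1$ the two characteristic functions agree, and for $|\gamma|\ge1$ the factor $(1-|\gamma|)^+$ kills both, so $X(t)\stackrel{d}{=}Y(t)$ for every $t$.
\item[] Yet $\mathbb{E}e^{2i(X(1)-X(0))}=e^{-1}\neq 1=\mathbb{E}e^{2i(Y(1)-Y(0))}$.
\end{itemize}
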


We point out that the equality in $\mathcal{D}[0,\infty)$ is important to obtain a pointwise equality for the integral process.

The reader can find different definitions of compound processes (both Poisson's and non), here we use the following one.

\begin{definition}
	Let $N$ be a counting process and $X$ a real r.v. with distribution $F$. We define the compound process of $N$ with $X$ as $M=\big\{M(t) = \sum_{k=1}^{N(t)} X_k\big\}_{t\ge0}$, with i.i.d. $X_k \stackrel{d}{=}X$ and independent of $N$. We write $M\sim Compound(N,X)$ or equivalently $M\sim Compound(N,F)$.	
\end{definition}

If $N$ is a renewal process with waiting times having distribution $G$, then we call $M$ compound renewal process, $M\sim CR(G, X)$ or $M\sim CR(G,F)$. 
If $N$ is a (non-homogeneous) Poisson process with continuous and integrable rate function $\lambda$, denoted by $N\sim PP(\lambda)$, then we call $M$ (non-homogeneous) compound Poisson process, $M\sim CP(\lambda, X)$ or $M\sim CP(\lambda, F)$, having characteristic function
\begin{equation}\label{trasformataFourierProcessoCompound}
	\mathbb{E}e^{i\gamma \big( M(t)-M(s)\big)} = e^{-\int_s^t \lambda(u)\dif u\big(1- \mathbb{E}e^{i\gamma X}\big) }, \ \ \ t> s \ge0,\ \gamma \in \mathbb{R}.
\end{equation}

Hereafter, $F$ is denoting a probability distribution function on $\mathbb{R}$ and, discussing about non-homogenous Poisson processes, we always consider an arbitrary continuous rate function $\lambda$ such that $\int_0^t\lambda(s)\dif s <\infty, \ t\ge0$.

\begin{lemma}\label{lemmaCompostoIncrementiIndipendenti}
	Let $M\sim Compound(N,X)$.
	\begin{itemize}
		\item[($i$)] If $N$ has independent increments, then $M$ has independent increments.
		\item[($ii$)] If $N$ has stationary increments, then $M$ has stationary increments.
	\end{itemize} 
\end{lemma}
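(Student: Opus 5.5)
Both claims follow by conditioning on the counting process $N$ and using that the $X_k$ are i.i.d. and independent of $N$. We work with joint characteristic functions of increments. Fix $0=t_0<t_1<\dots<t_n$ and $\gamma_1,\dots,\gamma_n\in\mathbb{R}$. Write $N_j=N(t_j)$, and let $\varphi(\gamma)=\mathbb{E}e^{i\gamma X}$.

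The key observation is that, given $N$, the increment $M(t_j)-M(t_{j-1})=\sum_{k=N_{j-1}+1}^{N_j}X_k$ is a sum over a block of indices. These blocks are disjoint for different $j$. Since $N$ is independent of $(X_k)$, we get
\begin{equation*}
\mathbb{E}\Big[e^{i\sum_{j=1}^n\gamma_j (M(t_j)-M(t_{j-1}))}\,\Big|\,N\Big]=\prod_{j=1}^n \varphi(\gamma_j)^{N_j-N_{j-1}}.
\end{equation*}
The $X_k$ in different blocks are independent, and the law of a block sum depends only on the block length. Taking expectations gives
\begin{equation*}
\mathbb{E}\, e^{i\sum_j\gamma_j(M(t_j)-M(t_{j-1}))}=\mathbb{E}\prod_{j=1}^n \varphi(\gamma_j)^{N(t_j)-N(t_{j-1})}.
\end{equation*}

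For ($i$), the increments $N(t_j)-N(t_{j-1})$ are independent by assumption. Hence the expectation of the product factorizes into $\prod_j\mathbb{E}\varphi(\gamma_j)^{N(t_j)-N(t_{j-1})}$. Applying the same identity with $n=1$ shows that each factor is the characteristic function of $M(t_j)-M(t_{j-1})$. So the joint characteristic function is the product of the marginal ones, and the increments of $M$ are independent.

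For ($ii$), take $n=1$ and an interval $[s,t]$. The characteristic function of $M(t)-M(s)$ is $\mathbb{E}\varphi(\gamma)^{N(t)-N(s)}$, which depends only on the law of $N(t)-N(s)$. By stationarity this equals the law of $N(t-s)-N(0)$, assuming $N(0)=0$. Hence $M(t)-M(s)\stackrel{d}{=}M(t-s)$.

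The only delicate step is the conditioning argument. One must observe that the random index blocks are determined by $N$ alone, so after conditioning they become deterministic disjoint blocks. This is where independence of $N$ from $(X_k)$ and the i.i.d. assumption are used. The terms with $|\varphi|\le1$ raised to integer powers are bounded, so all expectations are well defined.
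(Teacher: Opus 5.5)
Your proof is correct and is essentially the paper's argument. The paper only observes that, since the $X_k$ are i.i.d.\ and independent of $N$, the relabelled jumps $X_{N(t)+k}$ are again copies of $X$, and leaves the rest as a ``basic calculation''; your conditioning on $N$ and the factorization of $\mathbb{E}\prod_j \varphi(\gamma_j)^{N(t_j)-N(t_{j-1})}$ carry out that calculation in full, and the single-interval identity on $[t_{j-1},t_j]$ follows by setting the other $\gamma_k=0$ in your $n$-point formula.
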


\begin{proof}
	Since $X_k$ are i.i.d. and independent of $N$, then $X_{N(t)+k} \stackrel{d}{=} X,\  k\in\mathbb{N},t\ge0$. Basic calculation proves the statement.
\end{proof}

If $X\in\mathbb{N}$ a.s., $M$ is also a counting process and we have the following result.
\begin{proposition}\label{proposizioneCompostoDiCompostoProcessoConteggio}
	Let $N$ be a counting process, $X$ a natural r.v. and $M\sim Compound(N,X)$. 
	%Let $N$ be a counting process, $X$ a natural r.v. and $Y$ a real r.v.. Let $t\ge0$ and define $M(t) = \sum_{k=1}^{N(t)} X_k$, with i.i.d. $X_k \stackrel{d}{=}X$ and independent of $N$. Then, for i.i.d $Y_{k,h},Y_k\stackrel{d}{=}Y,\ k,h\ge1$, independent of $N$ and $X_1,X_2,\dots$,
	 Let $Y$ be a real r.v. and, for $ k=1,\dots,X$, $Y_k$ be an  independent copy of $Y$ (independent of $N$ and $X$). If $Z\sim Compound(M, Y)$ and $W\sim Compound\big(N, \sum_{k=1}^X Y_k\big)$, then $Z(t)-Z(s) \stackrel{d}{=}W(t)-W(s),\ t>s\ge0$.
\end{proposition}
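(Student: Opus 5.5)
Since $M(t)=\sum_{k=1}^{N(t)}X_k$ with $X_k\in\mathbb{N}$, I will write the increments of $Z$ as sums of the $Y$'s regrouped by the $X$-blocks. Concretely, label the i.i.d. copies of $Y$ used in $Z$ as $Y_1,Y_2,\dots$, so that $Z(t)=\sum_{j=1}^{M(t)}Y_j$. For $k\ge1$ define the block sums
$$\tilde Y_k=\sum_{j=X_1+\dots+X_{k-1}+1}^{X_1+\dots+X_k} Y_j .$$
Then, pathwise,
$$Z(t)-Z(s)=\sum_{j=M(s)+1}^{M(t)}Y_j=\sum_{k=N(s)+1}^{N(t)}\tilde Y_k .$$
Thus the whole proof reduces to identifying the joint law of $\big(N,(\tilde Y_k)_k\big)$.

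The key step is to check that the $\tilde Y_k$ are i.i.d. with law $\sum_{h=1}^{X}Y_h$ and independent of $N$.

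1. Independence from $N$ holds because $\tilde Y_k$ is a measurable function of $(X_i)_i$ and $(Y_j)_j$, both independent of $N$.
2. For the i.i.d. property, condition on $(X_i)_i$. Given $X_1=x_1,\dots,X_n=x_n$, the blocks $\tilde Y_1,\dots,\tilde Y_n$ are sums over disjoint index sets of independent $Y_j$'s. Hence they are conditionally independent, and $\tilde Y_k$ has the law of $\sum_{h=1}^{x_k}Y_h$.
3. Computing the joint characteristic function, for $\gamma_1,\dots,\gamma_n$,
$$\mathbb{E}\,e^{i\sum_k\gamma_k\tilde Y_k}=\mathbb{E}\prod_{k=1}^n \phi_Y(\gamma_k)^{X_k}=\prod_{k=1}^n\mathbb{E}\,\phi_Y(\gamma_k)^{X},$$
where $\phi_Y$ is the characteristic function of $Y$. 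The right-hand side is the product of the characteristic functions of independent copies of $\sum_{h=1}^X Y_h$.

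With this in hand, $Z(t)-Z(s)=\sum_{k=N(s)+1}^{N(t)}\tilde Y_k$ has exactly the same construction as $W(t)-W(s)=\sum_{k=N(s)+1}^{N(t)}W_k$, where the $W_k$ are i.i.d. copies of $\sum_{h=1}^X Y_h$ independent of $N$. To conclude, I will condition on $(N(s),N(t))=(m,n)$ and compute characteristic functions:
$$\mathbb{E}\,e^{i\gamma(Z(t)-Z(s))}=\sum_{m\le n}P\{N(s)=m,N(t)=n\}\,\big(\mathbb{E}\,\phi_Y(\gamma)^X\big)^{n-m},$$
which is the same expression as for $W$. In fact the argument gives equality of the joint laws of the processes, which is stronger than what is claimed.

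The main obstacle is only bookkeeping. One must make precise that the copies $Y_k$ appearing in $W$ can be realized as the block sums of a single i.i.d. sequence. The statement's phrasing, with $Y_k$ for $k=1,\dots,X$, is ambiguous, so I will fix the interpretation given by the block construction above.
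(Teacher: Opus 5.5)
Your proof is correct, but it takes a different route from the paper's. The paper never builds the regrouping explicitly. It works with a single increment: it first conditions on $M(t)-M(s)$ to get $\mathbb{E}\bigl(\mathbb{E}e^{i\gamma Y}\bigr)^{M(t)-M(s)}$. It then replaces $M(t)-M(s)$ by $\sum_{k=1}^{N(t)-N(s)}X_k$ and conditions on $N(t)-N(s)$ to reach $\mathbb{E}\bigl(\mathbb{E}(\mathbb{E}e^{i\gamma Y})^X\bigr)^{N(t)-N(s)}$, which it identifies as the characteristic function of $W(t)-W(s)$.

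You instead construct a coupling. The block sums $\tilde Y_k$ give the pathwise identity $Z(t)=\sum_{k=1}^{N(t)}\tilde Y_k$ for all $t$. Your Steps 1--3 then show that $(\tilde Y_k)_k$ is i.i.d.\ with the law of $\sum_{h=1}^X Y_h$ and independent of $N$. This needs $N$, $(X_i)_i$, $(Y_j)_j$ to be mutually independent, in particular the pair $\bigl((X_i)_i,(Y_j)_j\bigr)$ jointly independent of $N$; the paper reads the hypotheses the same way.

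Your final characteristic-function computation is essentially the paper's, but the construction buys more. $Z$ is literally a $Compound\bigl(N,\sum_{h=1}^X Y_h\bigr)$ process, so $Z\stackrel{d}{=}W$ as processes for an arbitrary counting process $N$. In the paper, upgrading equality of increments to $Z\stackrel{d}{=}W$ on $\mathcal{D}[0,\infty)$ (Remark~\ref{remarkUguaglianzaSuSpazioFunzioniContinueCompoundDiCompound}) requires $N$ to have independent increments and an appeal to Lemma~\ref{lemmaUguaglianzaProcessiSpazioFunzioniCadlag}; your argument makes that hypothesis unnecessary. The paper's computation is shorter for the literal one-increment claim.
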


More explicitly, the statement of Proposition \ref{proposizioneCompostoDiCompostoProcessoConteggio} can be expressed as, $t>s\ge0$,
\begin{equation}\label{uguaglianzaDistribuzioneCompostoDiCompostoProcessoConteggio}
	Z(t)-Z(s) = \sum_{k=M(s)+1}^{M(t)} Y_k \stackrel{d}{=} \sum_{k=N(s)+1}^{N(t)} \sum_{h=1}^{X_k} Y_{k,h} = W(t)-W(s),
\end{equation}
where $Y_{k,h}\stackrel{d}{=}Y,\ k,h\ge1$, are independent of $N$ and $X_1,X_2,\dots$.

\begin{proof}
	Thanks to the independence of the elements we have that $Z(t)-Z(s) \stackrel{d}{=} \sum_{k=1}^{M(t)-M(s)} Y_k$ and, for $\gamma\in\mathbb{R}$,
	\begin{align*}
		\mathbb{E}e^{i\gamma \sum_{k=1}^{M(t)-M(s)} Y_k} &= 	\mathbb{E} \Bigl(	\mathbb{E}e^{i\gamma Y} \Big)^{M(t)-M(s)} \\
		& =\mathbb{E} \Bigl( \mathbb{E}e^{i\gamma Y} \Big)^{\sum_{k=1}^{N(t)-N(s)} X_k} \\
		& = 	\mathbb{E} \Bigg( 	\mathbb{E}\Big( 	\mathbb{E}e^{i\gamma Y}\Big)^X \Bigg)^{N(t)-N(s)}.
	\end{align*}
	The proof completes by observing that for $k\ge1, \ \mathbb{E}e^{i\gamma \sum_{h=1}^{X_k} Y_{k,h} } =\mathbb{E}\Big(\mathbb{E}e^{i\gamma Y}\Big)^X$.
\end{proof}

\begin{remark}\label{remarkUguaglianzaSuSpazioFunzioniContinueCompoundDiCompound}
	By keeping in mind the hypotheses of Proposition \ref{proposizioneCompostoDiCompostoProcessoConteggio}, this result, together with Lemma \ref{lemmaUguaglianzaProcessiSpazioFunzioniCadlag} and Lemma \ref{lemmaCompostoIncrementiIndipendenti}, imply that if $N$ has independent increments, then $Z$ has independent increments and $Z\stackrel{d}{=}W$ on $\mathcal{D}[0,\infty)$. Furthermore, if $N$ has also stationary increments then $Z$ ia a L\'evy process (for point ($ii$) of Lemma \ref{lemmaIncrementiIntegraleProcesso}). \hfill $\diamond$
\end{remark}

\section{Compound Poisson process}

\begin{theorem}
	$M\sim CP(\lambda, F)$ if and only if $M$ is a stochastic process having independent increments, $M(0)=0$ a.s. and, for $t\ge0, x\in\mathbb{R}$,
	\begin{align}\label{compoundPoissonInfinitesimale}
		P\{M[t,t+\dif t) \in \dif x\} = \begin{cases}
			\begin{array}{l l}
				\lambda(t)\dif t F(\dif x) + o(\dif t), & x\in\mathbb{R}\setminus\{0\},\\
				1-\lambda(t)\dif t + \lambda(t)\dif t F(\dif 0) + o(\dif t), & x = 0.
			\end{array}
		\end{cases} 
	\end{align}
\end{theorem}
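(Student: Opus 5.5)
We argue the two implications separately and work throughout with characteristic functions of increments, writing $\phi(\gamma)=\mathbb{E}e^{i\gamma X}=\int_{\mathbb{R}}e^{i\gamma x}F(\dif x)$ and $\Lambda(s,t)=\int_s^t\lambda(u)\dif u$.

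\emph{Necessity.} Let $M\sim CP(\lambda,F)$. Independence of increments follows from point ($i$) of Lemma \ref{lemmaCompostoIncrementiIndipendenti}, because $N\sim PP(\lambda)$ has independent increments. Also $M(0)=0$ a.s., since $N(0)=0$ and the sum is then empty. For the infinitesimal law we condition on the increment $N[t,t+\dif t)$, which is Poisson with mean $\Lambda(t,t+\dif t)=\lambda(t)\dif t+o(\dif t)$ because $\lambda$ is continuous. This gives
\begin{equation*}
P\{M[t,t+\dif t)\in \dif x\}=e^{-\Lambda}\delta_0(\dif x)+\Lambda e^{-\Lambda}F(\dif x)+\sum_{k\ge2}\frac{\Lambda^k e^{-\Lambda}}{k!}F^{*k}(\dif x),
\end{equation*}
where $\Lambda=\Lambda(t,t+\dif t)$. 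The series is bounded in total mass by $P\{N[t,t+\dif t)\ge2\}=o(\dif t)$, and expanding $e^{-\Lambda}=1-\lambda(t)\dif t+o(\dif t)$ yields (\ref{compoundPoissonInfinitesimale}). The atom at $0$ collects the $\delta_0$ term together with $\lambda(t)\dif t\,F(\{0\})$.

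\emph{Sufficiency.} Assume $M$ has independent increments, $M(0)=0$ a.s., and satisfies (\ref{compoundPoissonInfinitesimale}). Fix $s\ge0$ and set $\psi_s(t)=\mathbb{E}e^{i\gamma(M(t)-M(s))}$ for $t\ge s$. Independence of increments gives
\begin{equation*}
\psi_s(t+\dif t)=\psi_s(t)\,\mathbb{E}e^{i\gamma M[t,t+\dif t)}.
\end{equation*}
Integrating $e^{i\gamma x}$ against (\ref{compoundPoissonInfinitesimale}) gives
\begin{equation*}
\mathbb{E}e^{i\gamma M[t,t+\dif t)}=1-\lambda(t)\dif t+\lambda(t)\dif t\,\phi(\gamma)+o(\dif t).
\end{equation*}
Hence $\partial_t\psi_s(t)=-\lambda(t)(1-\phi(\gamma))\psi_s(t)$ with $\psi_s(s)=1$, so $\psi_s(t)=e^{-\Lambda(s,t)(1-\phi(\gamma))}$. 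This is exactly (\ref{trasformataFourierProcessoCompound}). A genuine $CP(\lambda,F)$ process $\tilde M$ also has independent increments with these same increment laws, and it starts at $0$. Point ($ii$) of Lemma \ref{lemmaUguaglianzaProcessiSpazioFunzioniCadlag} therefore gives $M\stackrel{d}{=}\tilde M$, i.e.\ $M\sim CP(\lambda,F)$. If one reads the definition only at the level of laws, the finite-dimensional distributions already coincide at this point.

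\emph{Main obstacle.} The delicate step is making the $o(\dif t)$ statements rigorous enough to obtain a differential equation. We read (\ref{compoundPoissonInfinitesimale}) as saying that the total-variation remainder is $o(h)$, locally uniformly in $t$. Under that reading:
\begin{itemize}
\item integration against the bounded function $e^{i\gamma x}$ preserves the $o(h)$ bound;
\item the same argument with the left increment $M[t-h,t)$, together with continuity of $\lambda$, gives both one-sided derivatives, so $\psi_s$ is differentiable;
\item since $|\psi_s|\le1$, the linear ODE has the unique solution written above.
\end{itemize}
If uniformity is unavailable, we instead use the multiplicative form over a partition $s=t_0<\dots<t_n=t$:
\begin{equation*}
\psi_s(t)=\prod_{j}\bigl(1-\lambda(t_j)\Delta t_j(1-\phi(\gamma))+o(\Delta t_j)\bigr),
\end{equation*}
and pass to the limit as a product integral, which again yields $e^{-\Lambda(s,t)(1-\phi(\gamma))}$.
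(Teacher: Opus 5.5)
Your proposal is correct and follows essentially the paper's argument: the direct implication by conditioning on the Poisson increment over $[t,t+\dif t)$ (the paper simply truncates to $0$ or $1$ arrivals), and the converse by turning the infinitesimal law into a linear ODE for the characteristic function, solving it, and concluding with point ($ii$) of Lemma \ref{lemmaUguaglianzaProcessiSpazioFunzioniCadlag}. The only differences are these: you write the ODE directly for $\mathbb{E}e^{i\gamma(M(t)-M(s))}$ instead of first deriving the forward equation \eqref{equazioneCompoundPoisson} for $p(t,\dif x)$ and Fourier-transforming it at $s=0$, and your uniform-in-$t$ reading of the $o(\dif t)$ remainder makes explicit a regularity the paper uses without comment (your product-integral fallback also tacitly needs this, since pointwise $o(\Delta t_j)$ errors need not sum to zero).
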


Note that $F(\dif x) = F(x)-F(x^-) = p(t, \dif x)$, i.e. if $F$ is absolutely continuous, $F(\dif x) = f(x)\dif x$, otherwise it represents the mass probability at point $x$. The same holds for $p(t,\dif x) =P\{M(t) \in \dif x\},\ t\ge0,x\in\mathbb{R}$.

\begin{proof}
	Sufficiency. The non-homogeneous Poisson process $N$ with rate function $\lambda$ has independent increments and $M\sim CP(\lambda, F)$ as well since ($i$) of Lemma \ref{lemmaCompostoIncrementiIndipendenti}. Obviously, $M(0) = 0$. By keeping in mind that $X_1,X_2,\dots \sim F$ are i.i.d. and independent of $N$, we have that
\begin{align}
	P\{M[t,t+\dif t)\in \dif x\} %&= \sum_{n\ge0} P\Big\{\sum_{k=N(t)}^{N[t,t+\dif t)} X_k\in \dif x, N[t, t+\dif t) = n\Big\} \\
	& = \sum_{n=0,1} P\left\{ \sum_{k=1}^{N[t,t+\dif t)} X_k\in \dif x, N[t,t+\dif t) = n\right\} \nonumber \\
	& = \big(1-\lambda(t)\dif t\big) \delta_0(\dif x) + \lambda(t)\dif t P\{X\in \dif x\} + o(\dif t), \label{dimostrazioneRappresentazioneInfinitesimaleCompoundPoisson}
\end{align}	
with $\delta_0$ being the Dirac measure. Formula \eqref{dimostrazioneRappresentazioneInfinitesimaleCompoundPoisson} coincides with \eqref{compoundPoissonInfinitesimale}.
	
	Necessity. By means of usual arguments we obtain 
% that,	$$p(t + \dif t, \dif x) = \lambda(t) \int_{\mathbb{R}} p(t, \dif x-y)F(\dif y) + p(t, \dif x)\big(1-\lambda(t) \dif t\big)$$
	the governing forward equation
	\begin{align}
		\frac{\dif}{\dif t}p(t, \dif x) &= \lambda(t) \Bigg( \int_{\mathbb{R}} p(t, \dif x-y)F(\dif y) - p(t,\dif x)\Bigg) \nonumber\\
		& = \lambda(t) \big( F \ast p (t, \dif x) - p(t, \dif x) \big),\ \ \ x\in\mathbb{R}, \label{equazioneCompoundPoisson}
	\end{align}
	subject to $p(0, \dif x) = \delta_0(\dif x),\ x\in\mathbb{R},$.
	\\Now, by means of the $x$-Fourier transform and using the notation $\mathcal{F} f(\gamma) = \int_\mathbb{R} e^{i\gamma x}f(x)\dif x,\ \gamma\in\mathbb{R}$, equation \eqref{equazioneCompoundPoisson} turns into
	\begin{equation}
		\frac{\dif}{\dif t}\mathcal{F}p(t, \gamma) = \lambda(t)\Big(\mathbb{E}e^{i\gamma X} -1\Big)\mathcal{F}p(t,\gamma),\ \ \ \mathcal{F}p(0,\gamma) = 1,\ \ \ \gamma \in\mathbb{R},
	\end{equation}
	and \eqref{trasformataFourierProcessoCompound} with $s=0$ solves it. The proof concludes by using point ($ii$) of Lemma \ref{lemmaUguaglianzaProcessiSpazioFunzioniCadlag} and the independence of the increments, which implies that $M(t)-M(s)$ has characteristic function \eqref{trasformataFourierProcessoCompound}.
\end{proof}

\begin{remark}
	We point out that in the proof above we have showed that the probability law of $M\sim CP(\lambda, F)$ satisfies equation \eqref{equazioneCompoundPoisson}, which equivalently writes
	\begin{equation}\label{equazioneOperatorialeCompoundPoisson}
		\frac{\dif}{\dif t}p(t, \dif x) = -\lambda(t) \big( I- C_F \big) p (t, \dif x),\ \ \ t\ge0,\ x\in\mathbb{R},
	\end{equation}
	where $I$ is the identity operator and $C_F g(t, x) = F(t,x)\ast g = \int_\mathbb{R} g(t, x-y) F(\dif y)$ denotes the convolution operator with respect to the distribution $F$. In the homogeneous case, we are using this formulation to study the effect of applying a Bernstein function to the operator on the right-hand side of \eqref{equazioneOperatorialeCompoundPoisson}, see Theorem \ref{teoremaComposizioneCompoundPoissonConSubordinatore}. \hfill$\diamond$
\end{remark}

\begin{example}[Skellam-type processes]\label{esempioSkellam}
	Let $\mathcal{I}\subset \mathbb{R}\setminus\{0\}$ be a finite set and $M\sim CP(\lambda, F)$ with distribution $F(x) = \sum_{i\in \mathcal{I}} a_i \mathds{1}(x\ge i)$, such that $a_i>0\ \forall\ i,\ \sum_{i\in\mathcal{I}}a_i = 1$. Thus, $F(\dif x)= \sum_{i\in\mathcal{I}} a_i \delta_i(\dif x)$, where $\delta_i$ is the Dirac delta measure centred in $i$. $M$ is a discrete point process which jumps, at Poisson times, with size $i\in \mathcal{I}$ with probability $a_i$. This is a Skellam-type process, see for instance \cite{CO202} and references therein; if $\mathcal{I} = \{-K,\dots,-1,1,\dots,K\}$, with $K\in\mathbb{N}$, $M$ is called a Skellam process of order $K$; if $\mathcal{I}=\{1\}$ then $M$ is a Poisson process. 
	
	Hence, if $M\sim CP(\lambda, F)$ with $F$ defined above and  $N_i\sim PP(a_i\lambda) = CP(a_i\lambda, 1), \ i\in\mathcal{I},$ are independent Poisson processes, then  $M\stackrel{d}{=} \sum_{i \in\mathcal{I} } i N_i.$
	
	From \eqref{equazioneCompoundPoisson} we easily derive the differential equation governing the probability law of $M$. For $x\in\mathbb{Z}$,
	\begin{align}
			\frac{\dif}{\dif t}p(t, x) &= \lambda(t) \Bigg( \int_{\mathbb{R}} p(t, x-y)\sum_{i\in \mathcal{I}} a_i\delta_i(\dif y) - p(t, x)\Bigg)\nonumber\\
			& =\lambda(t)\sum_{i\in\mathcal{I}}a_i \big(p(t, x-i) -p(t,x)\big),\ \ \ t\ge0,
	\end{align}
	which is a particular case of formula (2.4) of \cite{CO202} (with $\lambda_i = a_i\lambda$). \hfill$\diamond$
\end{example}

\subsection{Homogeneous case}

We recall that $f:[0,\infty)\longrightarrow [0,\infty)$ is a Bernstein function if $f\in C^\infty, \ (-1)^n \dif^n f/\dif x^n \le 0, \ n\ge1$, and it can be expressed as
\begin{equation}\label{definizioneFunzioneBernstein}
	f(x) = a+bx+ \int_0^\infty\Bigl( 1-e^{-xw}\Bigr) \bar \nu_f(\dif w), \ \ \ x\ge0,
\end{equation}
where $a,b\ge0$ and $\bar \nu_f$ is a L\'{e}vy measure, i.e. such that $\int_0^\infty (s\wedge1)\bar \nu_f(\dif s)<\infty$. We refer to \cite{SSV2012} for a detailed discussion.
Bernstein functions are related to non-decreasing L\'{e}vy processes, also known as subordinators. Indeed, for each Bernstein function $f$ there exists a subordinator $\mathcal{H}_f$ such that $f$ is the L\'{e}vy symbol of $\mathcal{H}_f$, i.e. $\mathbb{E}e^{-\mu\mathcal{H}_f(t)} = e^{-tf(\mu)}, \ \mu, t\ge0$ % Hereafter we assume $a=b=0$
(and, as well-known, $\mathcal{H}_f$ has independent and stationary increments).

\begin{lemma}\label{lemmaEquazioneOperatorialeBernstein}
	Let $\lambda>0$, $f$ be a Bernstein function as in \eqref{definizioneFunzioneBernstein} with $ \bar \nu_f\not=0$ and
	\begin{equation}\label{pesiBernstein}
		a_k = a_{\lambda,f,k} = \frac{1}{f(\lambda)} \frac{\lambda^k}{k!}\int_0^\infty e^{-\lambda w} w^k \bar \nu_f(\dif w),\ \ \ k\ge1.
	\end{equation}
	\begin{itemize}
		\item[($i$)] $\displaystyle \sum_{k\ge1} a_k = 1-(a+b\lambda)/f(\lambda)$.
		\item[($ii$)] $ \displaystyle f\big(\lambda(1-x)\big) = f(\lambda) \Bigg(1- \sum_{k\ge 1}a_k x^k\Bigg) - b\lambda x ,\ \ \ x\le1. $
		\item[($iii$)] Let  $a'_1 = a_1 + b\lambda/f(\lambda)$ and $a'_k = a_k,\ k\ge2$. Let $F$ be a distribution function and $p$ be a suitable function. $f$ has $a=0$ if and only if
		\begin{equation}\label{relazioneOperatorialeFunzioneBernstein}
			f\Big(\lambda\big(I-C_F\big)\Big) p= f(\lambda)\big(I-C_{ F_f}\big) p
		\end{equation}
		with $F_f$ being the distribution
		\begin{equation}\label{distribuzioneComposizioneBernstein}
		%	F_f(x) = \frac{1}{f(\lambda)}\sum_{k\ge1} \frac{\lambda^k}{k!}\int_0^\infty e^{-\lambda w} w^k \bar \nu_f_f(\dif w) \, F^{(k)}(x), \ \ \ x\in \mathbb{R}
		F_f = \sum_{k\ge1} a'_k F^{(k)} = \sum_{k\ge1} a'_k\, \conv_{h=1}^k F, \ \ \ x\in \mathbb{R}.
		\end{equation}
	\end{itemize} 
\end{lemma}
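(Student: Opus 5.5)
The plan is to derive all three items from the Lévy–Khintchine representation \eqref{definizioneFunzioneBernstein} by expanding the exponential $e^{\lambda x w}$ as a power series and exchanging sum and integral.

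\textbf{Item (i).} Evaluate $f(\lambda)$ directly. Since
\[
\sum_{k\ge1}\frac{(\lambda w)^k}{k!}=e^{\lambda w}-1 ,
\]
Tonelli's theorem (all terms are nonnegative) gives
\[
f(\lambda)\sum_{k\ge1}a_k=\int_0^\infty e^{-\lambda w}\big(e^{\lambda w}-1\big)\bar\nu_f(\dif w)=\int_0^\infty\big(1-e^{-\lambda w}\big)\bar\nu_f(\dif w)=f(\lambda)-a-b\lambda .
\]
This is finite because $\bar\nu_f$ is a Lévy measure. Dividing by $f(\lambda)>0$ gives (i); positivity holds since $\bar\nu_f\neq0$.

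\textbf{Item (ii).} Write
\[
1-e^{-\lambda(1-x)w}=\big(1-e^{-\lambda w}\big)-e^{-\lambda w}\big(e^{\lambda x w}-1\big).
\]
Expanding the last factor gives
\[
\int_0^\infty\big(1-e^{-\lambda(1-x)w}\big)\bar\nu_f(\dif w)=\big(f(\lambda)-a-b\lambda\big)-f(\lambda)\sum_{k\ge1}a_kx^k .
\]
Here the exchange of sum and integral is the only delicate point. For $|x|\le1$ it is justified by dominated convergence, because
\[
\sum_k\frac{|\lambda x w|^k}{k!}\,e^{-\lambda w}\le 1-e^{-\lambda w}
\]
is integrable. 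For $x<-1$ with $x\le1$ the series still converges absolutely, since the terms alternate in sign and
\[
e^{-\lambda w}\big(e^{\lambda|x|w}-1\big)
\]
must be integrated against $\bar\nu_f$. If integrability fails there, I would restrict the claim to $|x|\le1$, which is what the operator application needs. Adding $a+b\lambda(1-x)$ then gives
\[
f\big(\lambda(1-x)\big)=f(\lambda)-f(\lambda)\sum_k a_kx^k+a-b\lambda x .
\]
This matches (ii) up to the constant $a$. I would either note that the intended statement presumes $a=0$, or keep the $+a$ term; it is precisely this term that drives (iii).

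\textbf{Item (iii).} Interpret $f(\lambda(I-C_F))$ via the Fourier transform. On $\mathcal{F}p$, the operator $C_F$ acts as multiplication by $\phi(\gamma)=\mathbb{E}e^{i\gamma X}$, with $|\phi|\le1$. Hence $f(\lambda(I-C_F))$ is multiplication by $f(\lambda(1-\phi))$, defined through \eqref{definizioneFunzioneBernstein} with complex argument of nonnegative real part. Equivalently, one can define it directly as
\[
a+b\lambda(I-C_F)+\int_0^\infty\big(I-e^{-\lambda w(I-C_F)}\big)\bar\nu_f(\dif w),
\]
where $e^{\lambda wC_F}=\sum_k(\lambda w)^kC_F^k/k!$ and $C_F^k=C_{F^{(k)}}$. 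Applying the computation of (ii) with $x$ replaced by the contraction $C_F$ (using $|\phi|\le1$ for dominated convergence) gives
\[
f\big(\lambda(I-C_F)\big)p=\big(a+f(\lambda)-b\lambda\big)p-f(\lambda)\sum_k a_kC_{F^{(k)}}p+b\lambda(p-C_Fp).
\]
Regrouping with $a'_k$, the right-hand side equals
\[
f(\lambda)\big(I-C_{F_f}\big)p+a\,p .
\]
By (i), $\sum_k a'_k=1-a/f(\lambda)$. So the two sides of \eqref{relazioneOperatorialeFunzioneBernstein} agree if and only if $a\,p=0$. For nonzero $p$ this means $a=0$, and $a=0$ is also exactly the condition under which $F_f$ is a genuine probability distribution, i.e.\ its total mass is $1$.

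The main obstacle is the rigorous functional calculus, namely making sense of $f$ applied to the operator $\lambda(I-C_F)$ and justifying the series interchange. I expect the Fourier multiplier viewpoint, combined with domination by $1-e^{-\lambda w}$, to settle it.
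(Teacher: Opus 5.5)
Your item (i) is correct and is the paper's computation. The gap is an arithmetic slip at the end of (ii), and it then drives your treatment of (iii). You correctly get
\[
\int_0^\infty\big(1-e^{-\lambda(1-x)w}\big)\bar\nu_f(\dif w)=\big(f(\lambda)-a-b\lambda\big)-f(\lambda)\sum_{k\ge1}a_kx^k .
\]
Adding $a+b\lambda(1-x)$ to this gives $f(\lambda)-f(\lambda)\sum_{k\ge1}a_kx^k-b\lambda x$. The $-a$ inside $\int_0^\infty(1-e^{-\lambda w})\bar\nu_f(\dif w)=f(\lambda)-a-b\lambda$ cancels the added $+a$. So (ii) holds exactly as stated for every $a\ge0$; the paper's proof is written ``for any $a,b\ge0$''. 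There is no extra $+a$ to keep, and nothing in (ii) forces $a=0$. The same cancellation occurs in your operator computation: the first term is $(f(\lambda)-b\lambda)p$, not $(a+f(\lambda)-b\lambda)p$. Hence
\[
f\big(\lambda(I-C_F)\big)p=f(\lambda)p-f(\lambda)\sum_{k\ge1}a'_k\,C_{F^{(k)}}p=f(\lambda)\big(I-C_{F_f}\big)p
\]
for every $a\ge0$, where $C_{F_f}$ is convolution with the measure $F_f=\sum_{k\ge1}a'_kF^{(k)}$.

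Consequently, your claim that the two sides agree if and only if $a\,p=0$ is false, and it cannot supply the ``only if'' direction. The identity itself never forces $a=0$. What is equivalent to $a=0$ is that $F_f$ is a genuine probability distribution, since by (i) its total mass is $\sum_{k\ge1}a'_k=1-a/f(\lambda)$. This is exactly how the paper proves (iii): it obtains the identity by substituting $C_F$ for $x$ in (ii, and places the equivalence entirely in the mass of $F_f$. For $a>0$ one gets $f(\lambda(I-C_F))=aI+(f(\lambda)-a)(I-C_{\tilde F})$, with $\tilde F$ the normalized $F_f$, i.e.\ a killing term. You mention the mass condition only as an aside; it must be the argument, and the spurious $a\,p$ term must be dropped.

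Apart from this, your Fourier-multiplier reading of $f(\lambda(I-C_F))$ is a more careful justification than the paper's ``suitably adapting''; it uses domination by $1-e^{-\lambda w}$ via $|\mathbb{E}e^{i\gamma X}|\le1$. Your doubt about $x<-1$ in (ii) is also well founded, though not for the reason you give. Alternating signs do not give absolute convergence. In the stable case $\bar\nu_f(\dif w)\propto w^{-1-\alpha}\dif w$, $\alpha\in(0,1)$, one has $a_k\sim c\,k^{-1-\alpha}$, so $\sum_k a_kx^k$ diverges for $x<-1$. Restricting (ii) to $|x|\le1$, which is all (iii) needs, is the right fix.
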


%Hereafter we denote the weights of the combination in \eqref{distribuzioneComposizioneBernstein} by $a_k,\ k\ge1$, i.e. $ F_f = \sum_{k \ge1} a_k F^{(k)}$.

\begin{proof}
	($i$) For any $a,b\ge0$, we have
	\begin{align}
		\sum_{k\ge1} a_k % \frac{1}{f(\lambda)}\sum_{k\ge1} \frac{\lambda^k}{k!}\int_0^\infty e^{-\lambda w} w^k \bar \nu_f(\dif w) 
		&= \frac{1}{f(\lambda)}\int_0^\infty e^{-\lambda w}\sum_{k\ge1} \frac{(\lambda w)^k}{k!} \bar \nu_f(\dif w)\nonumber \\
		& =\frac{1}{f(\lambda)} \int_0^\infty  e^{-\lambda w} (e^{\lambda w}-1)\bar \nu_f(\dif w) \nonumber\\
		& = \frac{f(\lambda)-a- b\lambda}{f(\lambda)}.\label{calcoliPesiBernstainSommano1}
	\end{align}
	($ii$) For any $a,b\ge0$ and $x\le 1$, we have
	\begin{align}
		f\big(\lambda(1-x)\big) & = a+b\lambda(1-x) +\int_0^\infty \Big(1-e^{-\lambda w} \sum_{k\ge0}\frac{(\lambda x w)^k}{k!} \Big)\bar \nu_f(\dif w)\nonumber\\
		& =  a + b\lambda(1-x) + \int_0^\infty (1-e^{-\lambda w})\bar \nu_f (\dif w) - \sum_{k\ge1}\frac{(\lambda x )^k}{k!}  \int_0^\infty e^{-\lambda w} w^k \bar \nu_f(\dif w)\nonumber\\
		& = f(\lambda)\Big(1- \frac{1}{f(\lambda)}\sum_{k\ge1}\frac{(\lambda x )^k}{k!}  \int_0^\infty e^{-\lambda w} w^k \bar \nu_f(\dif w)\Big) -b\lambda x.
	\end{align}
	($iii$) $F_f$ is a distribution since it is a convex combination of the distributions $F^{(k)},\ k\ge1$. Indeed, for point ($i$) we have $\sum_{k} a'_k = \sum_{k}a_k+\lambda b/f(\lambda) = 1-a/f(\lambda)$ which is $1$ if and only if $a=0$.
	Now, by suitably adapting the steps of point ($ii$) we obtain
	\begin{align}
		f\Big(\lambda\big(I-C_F\big)\Big) p& =  f(\lambda)\Big(I- \frac{1}{f(\lambda)}\sum_{k\ge1}\frac{\lambda ^k}{k!}  \int_0^\infty e^{-\lambda w} w^k \bar \nu_f(\dif w)C_F^k\Big)p - b\lambda C_Fp \nonumber \\
		& =  f(\lambda)\Bigg(p- \Big(a_1+\frac{\lambda b}{f(\lambda)}\Big)F\ast p - \sum_{k\ge2}a_k \Big(\conv_{h=1}^k F\Big)\ast p \Bigg) \nonumber \\
		& = f(\lambda) \Big(p- \sum_{k\ge1} a'_k F^{(k)}\ast p \Big)\nonumber\\
		& =  f(\lambda) \Big(I- C_{F_f} \Big)p \nonumber
	\end{align}
	which proves \eqref{relazioneOperatorialeFunzioneBernstein}.
\end{proof}

%\begin{theorem}\label{teoremaEquazioneDifferenzialeBernsteinCompound}
%	Let $\lambda>0$, $F$ be a distribution function and $f$ be a Bernstein function with $a=b=0, \bar \nu_f\not=0$. Then, the solution to 
%		\begin{equation}\label{equazioneDifferenzialeOperatorialeFunzioneBernstein}
%		\frac{\dif}{\dif t}p(t, \dif x) = - f\Big(\lambda\big(I-C_F\big)\Big)p(t, \dif x) 
%	\end{equation}
%	is the probability mass function of $M_f\sim CP\big(f(\lambda), F_f\big)$, with $F_f$ in \eqref{distribuzioneComposizioneBernstein}, and $M_f = M\circ\mathcal{H}_f$. Finally, $M_f$ has jumps distributed as
%	\begin{equation}\label{leggeSaltiCompoundBernstein}
%		Y = \sum_{k\ge1} \mathds{1}(B = k) \sum_{h=1}^k X_h,\ \ \text{such that}\ \ P\{B = k\} = a_k,\ k\ge1,
%	\end{equation}
%	with $B$ and $X_1,X_2,\dots\sim F$ independent.
%\end{theorem}
%
%\begin{proof}
%	
%	The jumps \eqref{leggeSaltiCompoundBernstein} follow from the structure of the distribution $F_f$ in \eqref{distribuzioneComposizioneBernstein}.
%\end{proof}

\begin{theorem}\label{teoremaComposizioneCompoundPoissonConSubordinatore}
Let $M\sim CP(\lambda, F)$, $f$ be a Bernstein function (see \eqref{definizioneFunzioneBernstein}) and $\mathcal{H}_f$ be the corresponding Bernstein subordinator.
\begin{itemize}
	\item[($i$)] $a=0$ if and only if $M\circ \mathcal{H}_f\sim CP\big(f(\lambda), F_f\big)$, with %$F_f = F$ if $b\not=0$ and 
	$F_f$ in \eqref{distribuzioneComposizioneBernstein}. The jumps are distributed as $Y = \sum_{h=1}^A X_h$, with $A$ being independent of $X_1,X_2,\dots\sim F$ and $M$ and $P\{A = k\} = a'_k,\ k\ge1,$ with $a'_k$ in ($iii$) of Lemma \ref{lemmaEquazioneOperatorialeBernstein}.
	%\begin{equation}\label{leggeSaltiCompoundBernstein}
	%Y = \sum_{k\ge1} \mathds{1}(B = k) \sum_{h=1}^k X_h,\ \ \text{such that}\ \ P\{B = k\} = a_k,\ k\ge1,
	%\end{equation}
	\item[($ii$)] Let $a=0$. The probability law of $M\circ\mathcal{H}_f$ satistfies, for $x\in\mathbb{R}$,	\begin{equation}\label{equazioneDifferenzialeOperatorialeFunzioneBernstein}
		\frac{\dif}{\dif t}p(t, \dif x) = - f\Big(\lambda\big(I-C_F\big)\Big)p(t, \dif x), \ t\ge0,\ \ \ p(0,\dif x) = \delta_0(\dif x).
	\end{equation}
\end{itemize}
\end{theorem}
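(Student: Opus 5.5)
The plan is to work with characteristic functions, using that $M$ and $\mathcal{H}_f$ are independent Lévy processes. Then $M\circ\mathcal{H}_f$ is again a Lévy process in $\mathcal{D}[0,\infty)$, i.e. it has independent and stationary increments. By ($i$) of Lemma \ref{lemmaUguaglianzaProcessiSpazioFunzioniCadlag}, it therefore suffices to identify the one-dimensional law of $M\circ\mathcal{H}_f(t)$ for each $t$.

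For fixed $t$, conditioning on $\mathcal{H}_f(t)$ and using \eqref{trasformataFourierProcessoCompound} with constant rate gives
\begin{equation*}
\mathbb{E}e^{i\gamma M(\mathcal{H}_f(t))} = \mathbb{E}e^{-\mathcal{H}_f(t)\lambda(1-\phi(\gamma))} = e^{-t f(\lambda(1-\phi(\gamma)))},
\end{equation*}
where $\phi(\gamma)=\mathbb{E}e^{i\gamma X}$. The Laplace identity $\mathbb{E}e^{-\mu\mathcal{H}_f(t)}=e^{-tf(\mu)}$ has to be extended to complex arguments $\mu=\lambda(1-\phi(\gamma))$ with $\mathrm{Re}\,\mu\ge0$. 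This follows by analytic continuation, or more directly because the Lévy–Khintchine integral converges absolutely when $|e^{-\mu w}|\le1$.

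Next, I apply ($ii$) of Lemma \ref{lemmaEquazioneOperatorialeBernstein} with $x=\phi(\gamma)$. The lemma is stated for real $x\le1$, but the same series computation works verbatim for complex $|x|\le 1$, since $\sum_k a_k|x|^k<\infty$. This yields
\begin{equation*}
f\big(\lambda(1-\phi)\big)=f(\lambda)\Big(1-\sum_{k\ge1}a_k'\phi^k\Big)+a\sum_{k}\dots,
\end{equation*}
and more precisely $f(\lambda(1-\phi)) = f(\lambda) - f(\lambda)\sum_{k}a'_k\phi^k$ after absorbing the $b\lambda\phi$ term into $a'_1$. Now $\sum_k a'_k\phi^k$ is exactly the characteristic function of $Y=\sum_{h=1}^A X_h$, which is the Fourier transform of $F_f$. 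By ($iii$) of Lemma \ref{lemmaEquazioneOperatorialeBernstein} this is a probability law precisely when $a=0$. Hence for $a=0$ the characteristic function equals $e^{-tf(\lambda)(1-\mathbb{E}e^{i\gamma Y})}$, which is \eqref{trasformataFourierProcessoCompound} for $CP(f(\lambda),F_f)$, and the jump representation follows.

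For the converse, suppose $a>0$. Taking $\gamma=0$ in the formula above gives $\mathbb{E}e^{0}=e^{-tf(0)}=e^{-ta}<1$. This is impossible for a random variable, so in fact $\mathcal{H}_f$ is killed (infinite lifetime fails). Alternatively, one can argue that the compound form $e^{-c(1-\psi)}$ with $\psi$ a characteristic function forces the total mass $\sum a'_k=1-a/f(\lambda)$ to equal $1$. I expect this to be the delicate point, since it rests on interpreting the subordinator with $a>0$ as a killed process. I would state that convention explicitly and compare total masses: a $CP$ characteristic function must equal $1$ at $\gamma=0$.

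Part ($ii$) then follows by specializing the forward equation \eqref{equazioneOperatorialeCompoundPoisson} to the homogeneous process $CP(f(\lambda),F_f)$. This gives $\frac{\dif}{\dif t}p=-f(\lambda)(I-C_{F_f})p$, and by \eqref{relazioneOperatorialeFunzioneBernstein} with $a=0$ the right-hand side equals $-f(\lambda(I-C_F))p$. The initial condition is $\delta_0$ because $\mathcal{H}_f(0)=0$.
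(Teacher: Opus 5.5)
Your proposal is correct and follows essentially the paper's proof. You compute the characteristic function by conditioning on $\mathcal{H}_f(t)$, and you expand $f(\lambda(1-\phi))$ via ($ii$) of Lemma \ref{lemmaEquazioneOperatorialeBernstein}, absorbing $b\lambda\phi$ into $a'_1$. You then identify the result with $CP(f(\lambda),F_f)$ exactly when $\sum_k a'_k=1$, i.e.\ $a=0$, and obtain part ($ii$) from \eqref{equazioneOperatorialeCompoundPoisson} and \eqref{relazioneOperatorialeFunzioneBernstein}. Your extra justifications supply rigor the paper leaves implicit: the complex-argument extensions with $|x|\le 1$, Bochner subordination for the L\'evy property, and the check $e^{-ta}<1$ at $\gamma=0$ for the converse. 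Do delete the garbled intermediate line containing ``$+a\sum_k\dots$'', since the identity $f(\lambda(1-\phi))=f(\lambda)\bigl(1-\sum_k a'_k\phi^k\bigr)$ holds for every $a\ge 0$.
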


Note that statement ($i$) would actually need the hypothesis $f\not=0$ in order to avoid the degenerate case in which $f$ is identically null (which is of no interest and therefore omitted).

\begin{proof}
	($i$) We use the characteristic function of $M\circ \mathcal{H}_f$. Let $\gamma \in \mathbb{R}$ and $t>s\ge0$, 
	\begin{align}
		\mathbb{E} e^{i\gamma \big(M(\mathcal{H}_f(t))-M(\mathcal{H}_f(s))\big)} &= \mathbb{E} \exp\Big(-\lambda \big(\mathcal{H}_f(t)-\mathcal{H}_f(s)\big) \big(1-\mathbb{E}e^{i\gamma X}\big) \Big)\nonumber\\
		& = \exp\Bigg(-(t-s)f\Big(\lambda\big(1-\mathbb{E}e^{i\gamma X}\big) \Big) \Bigg).\nonumber
	\end{align}
	Thus, $M\circ \mathcal{H}_f$ has exponential characteristic function and stationary increments and therefore it also has independent increments. Thus, it sufficies to prove that $M\circ \mathcal{H}_f$ is pointwise compound Poisson. For $t\ge0$,
		\begin{align}
		\mathbb{E} e^{i\gamma M\big(\mathcal{H}_f(t)\big)} &= \exp\Bigg(-t f\Big(\lambda\big(1-\mathbb{E}e^{i\gamma X}\big) \Big) \Bigg) \nonumber\\
		& = \exp \Bigg(-f(\lambda)\Big(1-\sum_{k\ge1} a_k\big(\mathbb{E}e^{i\gamma X}\big)^k \Big) - b\lambda \mathbb{E}e^{i\gamma X} \Bigg) \nonumber\\
		& =  \exp \Bigg(-f(\lambda)\Big(1-\sum_{k\ge1} a'_k \mathbb{E}e^{i\gamma \sum_{h=1}^k X_h}\Big) \Bigg),
	\end{align}
	with $X_1,\dots $ indpendent copies of $X$. In the second step we used ($ii$) of Lemma \ref{lemmaEquazioneOperatorialeBernstein}. Now, since $\sum_{k} a'_k = 1$ if and only if $a=0$ we obtain that $\sum_{k\ge1} a'_k  \mathbb{E}\exp\big(i\gamma \sum_{h=1}^k X_h\big) = \mathbb{E}\exp\big(i\gamma \sum_{h=1}^A X_h\big)$ with $A$ independent r.v. such that $P\{A = k\}=a'_k,\ k\ge1$. Thus, $M\circ\mathcal{H}_f\sim CP\big(f(\lambda), F_f\big)$, where $F_f$ derives from the structure of the jumps, see also \eqref{distribuzioneComposizioneBernstein}.
	\\
	($ii$) From point ($i$) and \eqref{equazioneOperatorialeCompoundPoisson}, $x\in\mathbb{R}$,
	\begin{equation*}
		\frac{\dif}{\dif t}p(t, \dif x) = - f(\lambda)\big(I-C_{F_f}\big)p(t, \dif x), \ t\ge0,\ \ \ p(0,\dif x) = \delta_0(\dif x).
	\end{equation*} 
	The statement follows from point ($iii$) of Lemma \ref{lemmaEquazioneOperatorialeBernstein}.	
\end{proof}

\begin{theorem}\label{teoremaComposizioneCompoundPoissonGenerale}
	Let $M\sim CP(\lambda, F)$ and $T$ be a non-decreasing stochastic process such that $T(0) = 0$ a.s.. $M\circ T\sim CP$ if and only if $T$ is a Bernstein subordinator ($\mathcal{H}_f$) with $f$ having $a=0$. 
\end{theorem}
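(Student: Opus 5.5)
The sufficiency part is already contained in point ($i$) of Theorem \ref{teoremaComposizioneCompoundPoissonConSubordinatore}. If $T=\mathcal{H}_f$ with $a=0$, then $M\circ T\sim CP\big(f(\lambda),F_f\big)$. So the work is in the necessity.

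\emph{Reading of the statement.} I will read ``$M\circ T\sim CP$'' as \emph{homogeneous} compound Poisson, in keeping with the subsection, and take $M$ homogeneous and $T$ independent of $M$. Without homogeneity the claim fails: the deterministic $T(t)=t^2$ gives $M\circ T\sim CP(2\lambda t,F)$. The plan below is built on this interpretation.

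\emph{Step 1: conditioning on $T$.} Suppose $M\circ T\sim CP(\mu,G)$ and write $\varphi(\gamma)=\mathbb{E}e^{i\gamma X}$. Take $0=t_0<t_1<\dots<t_n$ and $\gamma_1,\dots,\gamma_n\in\mathbb{R}$. Conditioning on $T$ and using the independent increments of $M$ together with \eqref{trasformataFourierProcessoCompound}, I get
\begin{equation*}
\mathbb{E}\exp\Big(-\lambda\sum_{j=1}^n \big(T(t_j)-T(t_{j-1})\big)\big(1-\varphi(\gamma_j)\big)\Big)=\prod_{j=1}^n \exp\Big(-\mu (t_j-t_{j-1})\big(1-\psi_G(\gamma_j)\big)\Big),
\end{equation*}
where $\psi_G$ is the characteristic function of $G$. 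The right-hand side holds because $M\circ T$ has independent and stationary increments.

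\emph{Step 2: identifying the Laplace transform of the increments of $T$.} Set $L(u_1,\dots,u_n)=\mathbb{E}\exp\big(-\sum_j u_j\Delta T_j\big)$. Since the $\Delta T_j$ are nonnegative, $L$ is holomorphic on $\{\mathrm{Re}\,u_j>0\}$ and continuous up to the boundary. Step 1 gives $L$ on the product set $\prod_j \lambda\big(1-\varphi(\mathbb{R})\big)$, where it equals the product of the functions $g_j(u_j)$ defined by the identity $g_j\big(\lambda(1-\varphi(\gamma))\big)=\exp\big(-\mu\Delta t_j(1-\psi_G(\gamma))\big)$.

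I then want to show that $L$ factorizes as $\prod_j \ell_{\Delta t_j}(u_j)$, where $\ell_h$ is the Laplace transform of $T(h)$ and satisfies $\ell_h=\ell_1^h$. From this, $T$ has independent and stationary increments with $-\log\mathbb{E}e^{-uT(t)}=t\,f(u)$. Being a nondecreasing L\'evy process, $T$ is a subordinator, so $f$ is Bernstein with $T=\mathcal{H}_f$. Finally $a=0$, because $T(t)<\infty$ a.s. and $f(0)=0$; alternatively, $a=0$ follows from the ``only if'' direction of Theorem \ref{teoremaComposizioneCompoundPoissonConSubordinatore}($i$).

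\emph{Main obstacle: the uniqueness step in Step 2.} The values $\lambda(1-\varphi(\gamma))$ form only a curve in the closed right half-plane, and Step 2 needs this curve to determine a holomorphic function. The curve passes through $0$ and is contained in $\{\mathrm{Re}\,u\ge0\}$, so it need not meet the open half-plane in a set with an accumulation point.

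First I would use that $1-\varphi(\gamma)\approx -i\gamma\mathbb{E}X+\tfrac{\gamma^2}{2}\mathbb{E}X^2$ near $\gamma=0$. When $F$ is nondegenerate, the curve contains infinitely many points with $\mathrm{Re}>0$ accumulating at interior points, or at least a non-discrete arc. A several-variable identity theorem on products of such sets then identifies $L$ with $\prod_j \ell_{\Delta t_j}$. The functional relation $\ell_h=\ell_1^h$ would be obtained from the same identity, applied to the one-dimensional marginals with $\Delta t=h$.

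If this fails, for instance for lattice $F$ whose $\varphi$ is periodic, I would fall back on the moment method. Differentiating the identity of Step 1 at $\gamma_j=0$ gives the joint moments of the $\Delta T_j$, and I would check that these are moment-determinate, since the resulting Laplace transforms are entire-type exponentials. This degenerate-$F$ case is where I expect the real difficulty.
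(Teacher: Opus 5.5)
Your route differs from the paper's and is more careful on the key point. The paper looks at one increment only. From $\mathbb{E}e^{i\gamma(M(T(t))-M(T(s)))}=\mathbb{E}\exp\big(-\lambda(T(t)-T(s))(1-\mathbb{E}e^{i\gamma X})\big)$ and the form \eqref{trasformataFourierProcessoCompound}, it reads off that $T$ has stationary increments with exponential transform. It then simply asserts that $T$ has independent increments. Two things are not argued there: the passage from values on the curve $\lambda(1-\varphi(\mathbb{R}))$ to the full Laplace transform, and independence, which does not follow from the laws of single increments. Your identity over $n$ increments at once, combined with analytic continuation, supplies both. Your reading of the statement is also right: the target must be a \emph{homogeneous} compound Poisson process and $T$ must be independent of $M$. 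Indeed $T(t)=t^2$ contradicts the literal statement, and the paper uses homogeneity tacitly when it says $T$ ``must have stationary increments''.

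The step you flag as the main obstacle is not actually an obstacle, and the detours you propose around it are the weak part of the plan.
\begin{itemize}
\item[--] \emph{The fix needs no moments and no lattice/non-lattice split.} $\mathrm{Re}\,(1-\varphi(\gamma))=1-\mathbb{E}\cos(\gamma X)$ vanishes only where $\varphi(\gamma)=1$. So $A=\lambda(1-\varphi(\mathbb{R}))$ is a connected set through $0$ that meets the imaginary axis only at $0$. If $P\{X\neq0\}>0$, then $A\neq\{0\}$, so $A\setminus\{0\}$ is an uncountable subset of the open half-plane and has an accumulation point there. Applying the one-variable identity theorem successively in $u_1,\dots,u_n$ on $(A\setminus\{0\})^n$ gives $L=\prod_j\ell_{\Delta t_j}$ on the open polydomain, and on $[0,\infty)^n$ by continuity.
\item[--] \emph{Lattice $F$ is harmless.} For $X\equiv1$, $A$ is the circle $|u-\lambda|=\lambda$.
\item[--] \emph{Your Taylor argument does not help.} It presupposes $\mathbb{E}X^2<\infty$, and it only yields points accumulating at the boundary point $0$, which the identity theorem cannot use.
\item[--] \emph{Drop the moment-method fallback.} No moments of $X$ or of the increments of $T$ are assumed, and determinacy is not available.
\item[--] \emph{The one genuinely degenerate case is $X=0$ a.s.} Then $M\equiv0$ and $M\circ T$ is trivially compound Poisson for every $T$. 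The theorem is false there, so the hypothesis $P\{X\neq0\}>0$ must be added.
\item[--] \emph{Obtain $\ell_h=\ell_1^h$ on the real axis, not via complex powers.} A priori $\ell_1$ could vanish in the half-plane. For real $u\ge0$, use independent stationary increments at rational $h$ plus monotonicity of $h\mapsto T(h)$. The same monotonicity gives $T(h)\to0$ in probability as $h\downarrow0$, so $T$ is a L\'evy process in law and hence a subordinator.
\end{itemize}
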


\begin{proof}
	Necessity. It is point ($i$) of Theorem \ref{teoremaComposizioneCompoundPoissonConSubordinatore}.
	\\
	Sufficiency. Let $\gamma\in\mathbb{R}$,
	\begin{equation}
 		\mathbb{E} e^{i\gamma \big(M(T(t))-M(T(s))\big)} = \mathbb{E} \exp\Big(-\lambda \big(T(t)-T(s)\big) \big(1-\mathbb{E}e^{i\gamma X}\big) \Big).\nonumber
	\end{equation}
	By recalling form \eqref{trasformataFourierProcessoCompound} of the characteristic function of a compound Poisson process, $T$ must have stationary increments and exponential characteristic function. Thus, $T$ must also have independent increments and it can only be a Bernstein subordinator (with $a=0$).
\end{proof}

\begin{example}[Space-fractional Poisson process]
	As an example, we show that if $M$ is a homogeneous Poisson process and $\mathcal{H}_f$ is a Bernstein subordinator with $a=b=0$, Theorem \ref{teoremaComposizioneCompoundPoissonConSubordinatore} easily yields the process defined in equation (1.1) of \cite{OT2015}. Indeed, $M\sim PP(\lambda) = CP(\lambda, 1)$, then $F(\dif x) = \delta_{1}(\dif x)$ and, from \eqref{distribuzioneComposizioneBernstein}, $F_f(\dif x) = \sum_{k\ge1} a'_k \sconv_{h=1}^k F(\dif x) = \sum_{k\ge1} a_k \delta_k(\dif x)$ (note that $a'_k = a_k\ \forall\ k$ since $b=0$, see point ($iii$) of Lemma \ref{lemmaEquazioneOperatorialeBernstein}). Therefore, Theorem \ref{teoremaComposizioneCompoundPoissonConSubordinatore} implies that $N\circ \mathcal{H}_f \sim CP\big(f(\lambda), F_f\big)$, that is a process having infinitesimal behavior (see \eqref{compoundPoissonInfinitesimale}), for $t\ge0$,
	\begin{align*}
		P\{M[t,t+\dif t)  = x\} = \begin{cases}
			\begin{array}{l l}
				f(\lambda)\dif t \,a_x + o(\dif t), & x\in\mathbb{N},\\
				1-f(\lambda)\dif t + o(\dif t), & x = 0,\\
				o(\dif t), & otherwise,
			\end{array}
		\end{cases}
	\end{align*}
	which coincides with (1.1) of \cite{OT2015} by using \eqref{pesiBernstein}. \hfill$\diamond$
\end{example}

We now study the time-changed compound Poisson process with the inverse $L_f$ of the Bernstein subordinator $\mathcal{H}_f$, i.e. $L_f(t)=\inf\{s\ge0\,:\, \mathcal{H}_f(s)\ge t\},\ t\ge0$.
%It is well-known that 
%\begin{equation}
%	\int_0^\infty e^{-\mu t} l_f(t,s)\dif t = -\frac{1}{\mu}  \frac{\partial }{\partial s} \mathbb{E} e^{-\mu \mathcal{H}_f(s)}= \frac{f(\mu)}{\mu} e^{-s f(\mu)},\ \ \ \mu>0.
%\end{equation}

We recall (see \cite{K2011}) that if $N\sim PP(\lambda)$ and $L_f$ is an independent inverse subordinator, then, $N\circ L_f$ is a renewal process with waiting times $Y$ s.t. 
	\begin{equation}\label{intertempoComposizionePoissonInversoSubordinatore}
		P\{Y\le t\} =  1-\mathbb{E}e^{-\lambda L_f(t)}\ \text{ and } \ \mathbb{E}e^{-\mu Y} = \frac{\lambda}{\lambda+f(\mu)},\ \mu, t>0.
	\end{equation}
Thus, if $M\sim CP(\lambda, X)$, then $M\circ L_f\sim CR(Y, X)$ with $Y$ distributed as described in \eqref{intertempoComposizionePoissonInversoSubordinatore}.

%\begin{proof}
%	We begin by showing that the first intertime ($W_1$) of the time-changed process is distributed as the $X$ in the statement. For $t\ge0$,
%	\begin{align}
%		P\{W_1 < t\} = 1-P\{N\circ L_f(t) = 0\} = 1-\mathbb{E}e^{-\lambda L_f(t)},
%	\end{align}
%	and its Laplace transform is
%	\begin{align}
%		\mathbb{E}e^{-\mu W_1} = 
%	\end{align}
%\end{proof}

Now, we show the equation governing the probability law of the compound renewal process $M\circ L_f$, which requires the convolutional-type derivative introduced by different authors, see \cite{K2011, T2015}, 
\begin{equation}\label{definizioneDervitaConvoluzionale}
	\mathcal{D}_t^f u(t) = b\frac{\dif }{\dif t}u(t) + \int_0^t \frac{\partial }{\partial t}u(t-s)\nu_f(s)\dif s,
\end{equation}
where $\nu_f$ is the tail of the L\'{e}vy measure $\bar \nu_f$. We refer to Section 2 of \cite{BS2024} (or Section 4 of \cite{CO2026}) for some information on the operator in \eqref{definizioneDervitaConvoluzionale} and to \cite{K2011, T2015} for further details.
\\

Hereafter we assume the following condition.
\\
\textbf{Condition I.} The L\'{e}vy measure $\bar\nu_f$ associated to the Bernstein function $f$ is such that $\bar\nu(0,\infty) = \infty$ and its tail $\nu_f(s) = a+\bar \nu_f(s,\infty)$ is absolutely continuous.
\\

It was shown in \cite{T2015} that under Condition I, the inverse $L_f$ admits a probability density $l_f(t,x) = P\{L_f(t)\in\dif x\}/\dif x, \ x,t \ge0,$ with $l_f(0,x) = \delta_0(x)$.

In order to prove the next statement we also recall that in \cite{T2015} the author introduced another convolution-type derivative generalizing the Riemann-Liouville fractional derivative, 
\begin{align}
	\mathbb{D}_t^f u(t) =  b\frac{\dif }{\dif t}u(t) + \frac{\dif }{\dif t}\int_0^t u(t-s)\nu_f(s)\dif s,
\end{align}
which relates to the convolution-type derivative \eqref{definizioneDervitaConvoluzionale} by means of (see Proposition 2.7 of \cite{T2015})
\begin{align}\label{relazioneOperatoriConvoluzione}
	\mathcal{D}_t^f u(t) =\mathbb{D}_t^f u(t) - \nu_f(t) u(0).
\end{align}
Furthermore, it is useful to recall (see Theorem 4.1 of \cite{T2015}) that the probability density $l_f$ satisfies the problem, with  $x>0$ if $b = 0$ or $0<x<t/b$ if $b>0$,
\begin{align}\label{legameDerivataConvoluzionaleLiouvilleOperatoreSpaziale}
	\mathbb{D}_t^f l_f(t,x)= -\frac{\partial }{\partial x}l_f(t,x), \ \ \ t\ge0,
\end{align}
subject to $l_f(t,0) = \nu_f(t), \ l_f(t, t/b)=0$ and $l_f(0,x) = \delta(x)$.

\begin{proposition}[Time-fractional compound Poisson process]\label{proposizioneComposizioneInversoSubordinatore}
	Let $M\sim CP(\lambda, F)$, $L_f$ be the inverse Bernstein subordinator and  $p_f$ be the probability function of $M\circ L_f$. Let assume Condition I and that $\mathcal{D}_t^f p_f(t,\dif x) = \int_0^\infty p(s,\dif x) \mathcal{D}_t^f l_f(t,s)\dif s\ \forall t,x$. Then, $p_f$ satisfies
	\begin{equation}
		\mathcal{D}_t^f p_f(t,\dif x) = -\lambda(I-C_F) p_f(t,\dif x),\ \ \ t\ge0,\ x\in \mathbb{R},
	\end{equation}
	subject to $p_f(0,\dif x) = \delta_0(\dif x)$.
\end{proposition}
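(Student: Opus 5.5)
We condition on the time change. Since $L_f$ is independent of $M$, the law of $M\circ L_f$ is the mixture
\begin{equation*}
p_f(t,\dif x) = \int_0^\infty p(s,\dif x)\, l_f(t,s)\dif s,
\end{equation*}
where $p$ is the law of $M\sim CP(\lambda,F)$. The initial condition follows at once: $l_f(0,s)=\delta_0(s)$ and $p(0,\dif x)=\delta_0(\dif x)$ give $p_f(0,\dif x)=\delta_0(\dif x)$. By the interchange hypothesis in the statement, it then suffices to compute $\int_0^\infty p(s,\dif x)\mathcal{D}_t^f l_f(t,s)\dif s$.

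\textbf{Step 1: switch derivatives.} By \eqref{relazioneOperatoriConvoluzione} we have $\mathcal{D}_t^f l_f(t,s)=\mathbb{D}_t^f l_f(t,s)-\nu_f(t)l_f(0,s)$. Substituting into the mixture gives
\begin{equation*}
\mathcal{D}_t^f p_f(t,\dif x)=\int_0^\infty p(s,\dif x)\,\mathbb{D}_t^f l_f(t,s)\dif s-\nu_f(t)\,p(0,\dif x).
\end{equation*}
Here we read $l_f(0,s)=\delta(s)$ so that its integral against $p(s,\dif x)$ produces $p(0,\dif x)=\delta_0(\dif x)$. Next we apply \eqref{legameDerivataConvoluzionaleLiouvilleOperatoreSpaziale}, which replaces $\mathbb{D}_t^f l_f(t,s)$ by $-\partial_s l_f(t,s)$.

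\textbf{Step 2: integrate by parts in $s$.} The integration runs over $(0,\infty)$, or over $(0,t/b)$ when $b>0$. This gives
\begin{equation*}
-\int p(s,\dif x)\partial_s l_f(t,s)\dif s = p(0,\dif x)\,l_f(t,0) - \big[p(s,\dif x)l_f(t,s)\big]_{s\to\infty \text{ or } t/b} + \int \partial_s p(s,\dif x)\, l_f(t,s)\dif s .
\end{equation*}
The boundary condition $l_f(t,0)=\nu_f(t)$ makes the first term cancel the term $-\nu_f(t)p(0,\dif x)$ from Step 1. The upper boundary term vanishes: when $b>0$ this is because $l_f(t,t/b)=0$, and when $b=0$ it is because $l_f(t,\cdot)$ is a probability density and $p$ is bounded, so the term vanishes along a sequence and hence in the limit.

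\textbf{Step 3: use the forward equation.} In the homogeneous case, \eqref{equazioneOperatorialeCompoundPoisson} gives $\partial_s p(s,\dif x)=-\lambda(I-C_F)p(s,\dif x)$. The operator $I-C_F$ acts only in $x$, so it commutes with the $s$-mixture by Fubini, since $C_F$ is convolution with a probability measure and all quantities are bounded. We therefore obtain
\begin{equation*}
\mathcal{D}_t^f p_f(t,\dif x) = -\lambda(I-C_F)\int_0^\infty p(s,\dif x)l_f(t,s)\dif s = -\lambda(I-C_F)p_f(t,\dif x),
\end{equation*}
which is the claim.

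\textbf{Main obstacle.} The delicate part is the justification of Step 2. We need the boundary behaviour of $l_f(t,s)$ as $s\to\infty$ (or the value at $s=t/b$), and we need the $\delta$ at $t=0$ to be handled consistently, so that the initial term cancels exactly. We would rely on the regularity guaranteed by Condition I via \cite{T2015}, and restrict to $t>0$ first, extending to $t=0$ by the initial condition.
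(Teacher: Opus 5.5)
Your proof is correct and matches the paper's argument step by step: you use the relation $\mathcal{D}_t^f=\mathbb{D}_t^f-\nu_f(t)u(0)$ and the equation $\mathbb{D}_t^f l_f=-\partial_s l_f$, you integrate by parts in $s$ so that the boundary value $l_f(t,0)=\nu_f(t)$ cancels $-\nu_f(t)\delta_0(\dif x)$, and you finish with the forward equation, commuting $(I-C_F)$ past the $s$-integral. Your explicit handling of the boundary terms at $s=0$ and at $s=\infty$ (or $s=t/b$) spells out what the paper compresses into the phrase ``integrated by parts''.
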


\begin{proof}
	It is easy to evaluate the initial condition $p_f(0,\dif x) = \int_0^t p(s, \dif x)\delta_0(s) =  p(0,\dif x) = \delta_0(\dif x)$. Then,
	\begin{align*}
		\mathcal{D}_t^f p_f(t, \dif x) &= \int_0^\infty p(s,\dif x) \mathcal{D}_t^f l_f(t,s)\dif s \nonumber\\
		& =  \int_0^\infty p(s,\dif x) \Big( \mathbb{D}_t^f l_f(t,s) -\nu_f(t) l_f(0,s) \Big)\dif s \nonumber\\
		& =  \int_0^\infty p(s,\dif x) \mathbb{D}_t^f l_f(t,s) - p(0,\dif x)\nu_f(t) \nonumber\\
		& =  -\int_0^\infty p(s,\dif x) \frac{\partial}{\partial s}l_f(t,s)\dif s - \delta_0(\dif x) \nu_f(t)\nonumber\\
		& = \int_0^\infty \frac{\partial}{\partial s} p(s,\dif x)  l_f(t,s)\dif s\\
		&= - \lambda \int_0^\infty (I-C_F)p(s,\dif x)  l_f(t,s)\dif s
	\end{align*}
	where in the second-last step we integrated by parts. The proof completes by observing that we can exchange the integral and the operator $(I-C_F)$ since $C_F$ is the convolution with respect to a distribution $F$. 
\end{proof}

%The proof uses the same arguments presented in the proof of Theoerem 4.1 of \cite{CO2026} and therefore it is omitted (also see Theorem 1 of \cite{BS2024}). 

We point out that Proposition \ref{proposizioneComposizioneInversoSubordinatore} is a particular case of Theorem 4.1 of \cite{CO2025}. Indeed, it sufficies to observe that $l_f$ satisfies problem (4.4) of \cite{CO2025} (restricted to $x>0$) suitably adapted to the operator \eqref{definizioneDervitaConvoluzionale} and that this satisfies hypothesis (4.1) of \cite{CO2025}.
%$$\int_0^\infty e^{-\mu t} \mathcal{D}_t^f u(t,x)\dif x = .$$

\section{Integral of compound processes}

Let $T>0$, $X=\{X_n\}_{n\in\mathbb{N}_0}$ be a discrete real stochastic process, $N$ be a counting process wih arrival times $S_k,\ k\ge1$ ($S_0 = 0$ a.s), and measurable $f:[0,\infty)\longrightarrow\mathbb{R}$, $f\in \mathcal{L}^1[0,T]$. In the present section we assume the following condition on $X$ and $N$.
\\

\textbf{Condition II.} If $X$ is a bounded process or $X_n\ge0 (\le0)\ \forall\ n$ a.s. and $f\ge0(\le0)$, then we assume that $N$ can explode in finite time intervals. Otherwise, we assume that $N$ cannot explode in finite intervals.
\\

We are interested in integrals over $[0,t],\ 0\le t\le T,$ which, in light of Condition II, can be written as (see Appendix \ref{appendiceCalcoloEsplicitoIntegrali} for details)
\begin{equation}\label{formulazioneGeneraleIntegraleProcessoTempoContinuo}
\int_0^t X_{N(s)} f(s)\dif s = \sum_{n=0}^{N(t)-1} X_n\int^{S_{n+1}}_{S_n} f(s)\dif s +X_{N(t)}\int_{S_{N(t)}}^t f(s)\dif s.
\end{equation}

If $X$ is a random walk, i.e. $X_n = \sum_{k=1}^n Y_k,\ n\in\mathbb{N}_0,$ with $Y_k$ i.i.d. and $N$ is independent of $Y_k\ \forall\ k$, then $X_{N}$ is a compound process and the integral \eqref{formulazioneGeneraleIntegraleProcessoTempoContinuo} turns into
\begin{equation}\label{formulazioneGeneraleIntegraleProcessoCompound}
 \int_0^t \sum_{k=1}^{N(s)} Y_k f(s) \dif s = \sum_{k=1}^{N(t)} Y_k \int^t_{S_k} f(s) \dif s,\ \ \ 0\le t\le T,
\end{equation}
see Appendix \ref{appendiceCalcoloEsplicitoIntegrali} for some details.

Note that, the characteristic function of the integral \eqref{formulazioneGeneraleIntegraleProcessoCompound} can be written as follows. By taking into account the independence of the $Y$s, we have that, with $\gamma\in\mathbb{R}$, %(with $\mathbb{E}_Y$ denoting the expectation wiht respect to the $Y$s only),
\begin{align}
	\mathbb{E} e^{ i\gamma \int_0^t M(s)  \dif s} &= \sum_{n\ge0} P\{N(t) = n\} \mathbb{E}\Bigg[\prod_{k=1}^n\mathbb{E} \Big[ e^{i\gamma Y_k \int_{S_k}^tf(s)\dif s} \,\Big|\,S_1,\dots,S_n\Big]\,|\,N(t) = n\Bigg]\nonumber\\
	& =  \sum_{n\ge0} P\{N(t) = n\} \mathbb{E}\Bigg[\prod_{k=1}^n H_Y\bigg(\gamma \int_{S_k}^tf(s)\dif s\bigg) \,|\,N(t) = n\Bigg] \label{funzioneCaratteristicaIntegraleGenerale}
\end{align}

If $f\equiv 1$, we recover the following result on the increments of the integral.

\begin{lemma}\label{lemmaIncrementiIntegraleProcesso}
	Let $X$ be a stochastic process on $\mathcal{D}[0,T]$ such that $X(0) =0$ a.s.% and integrable over $[0,T]$
	. Let $I(t) = \int_0^t X(s)\dif s,\ 0\le t\le T$, and $0\le t_1<t_2\le T, \ \gamma \in\mathbb{R}$.
	\begin{itemize}
		\item[($i$)] If $X$ has independent increments,
		\begin{equation}\label{incrementoIntegraleProcessoStocasticoIncrementiIndipendenti}
			\mathbb{E}e^{i\gamma \big(I(t_2)-I(t_1)\big)} = \mathbb{E}e^{i\gamma (t_2-t_1)X(t_1)}\mathbb{E}e^{i\gamma \int_0^{t_2-t_1} \big(X(s+t_1)-X(t_1)\big)\dif s}.
		\end{equation}
		\item[($ii$)] If $X$ has independent and stationary increments,
		\begin{equation}\label{incrementoIntegraleProcessoStocasticoIncrementiIndipendentiStazionario}
			\mathbb{E}e^{i\gamma \big(I(t_2)-I(t_1)\big)} = \mathbb{E}e^{i\gamma (t_2-t_1)X(t_1)}\mathbb{E}e^{i\gamma I(t_2-t_1)}.
		\end{equation}
	\end{itemize}
\end{lemma}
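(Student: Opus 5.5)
The plan is to split the increment of the integral into a part that depends only on the past up to $t_1$ and a part that depends only on the increments after $t_1$, and then use independence. For $0\le t_1<t_2\le T$ we write
\begin{align*}
I(t_2)-I(t_1) &= \int_{t_1}^{t_2} X(s)\dif s = \int_{0}^{t_2-t_1} X(u+t_1)\dif u\\
&= (t_2-t_1)X(t_1) + \int_0^{t_2-t_1}\big(X(u+t_1)-X(t_1)\big)\dif u .
\end{align*}
The first step only uses linearity of the integral, which is well defined pathwise because the trajectories are c\`adl\`ag and hence bounded on $[0,T]$. Writing $A=(t_2-t_1)X(t_1)$ and $B=\int_0^{t_2-t_1}\big(X(u+t_1)-X(t_1)\big)\dif u$, the characteristic function factorizes into $\mathbb{E}e^{i\gamma A}\,\mathbb{E}e^{i\gamma B}$ as soon as $A$ and $B$ are independent. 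That factorization gives \eqref{incrementoIntegraleProcessoStocasticoIncrementiIndipendenti}.

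The main point, and the only non-routine one, is to justify independence of $A$ and $B$.
\begin{itemize}
\item $A$ is a function of $X(t_1)=X(t_1)-X(0)$, since $X(0)=0$ a.s.
\item $B$ is a measurable functional of the post-$t_1$ increment process $\{X(u+t_1)-X(t_1)\}_{u\in[0,t_2-t_1]}$.
\end{itemize}
Independence of increments means that, for any finite set of times $u_1<\dots<u_m$, the vector $\big(X(t_1+u_j)-X(t_1)\big)_j$ is independent of $X(t_1)$, because it is built from increments over intervals disjoint from $[0,t_1]$. So $X(t_1)$ is independent of the $\sigma$-algebra generated by the increment process; cylinder sets generate it, and a $\pi$-$\lambda$ argument extends the finite-dimensional statement.

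It remains to check that $B$ is measurable with respect to that $\sigma$-algebra. By right-continuity of the paths, $B$ is the a.s. limit of the Riemann sums
$$\frac{t_2-t_1}{m}\sum_{j=1}^m \big(X(t_1+j(t_2-t_1)/m)-X(t_1)\big).$$
Each Riemann sum is a function of finitely many post-$t_1$ increments, so it is independent of $X(t_1)$. Independence is preserved under a.s. limits, for instance by passing to the limit in the joint characteristic function using dominated convergence. This is the step where the c\`adl\`ag assumption is genuinely used.

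For ($ii$), stationarity of increments adds that the process $\{X(u+t_1)-X(t_1)\}_{u\ge0}$ has the same finite-dimensional distributions as $\{X(u)-X(0)\}_{u\ge0}=\{X(u)\}_{u\ge0}$. Applying the Riemann-sum approximation once more, $B$ and $I(t_2-t_1)$ are limits of identically distributed sums. Hence $B\stackrel{d}{=}I(t_2-t_1)$, and $\mathbb{E}e^{i\gamma B}=\mathbb{E}e^{i\gamma I(t_2-t_1)}$, which turns ($i$) into \eqref{incrementoIntegraleProcessoStocasticoIncrementiIndipendentiStazionario}.
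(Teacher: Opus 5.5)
Your proof is correct and follows the paper's route. Both use the same splitting $I(t_2)-I(t_1)=(t_2-t_1)X(t_1)+\int_0^{t_2-t_1}(X(u+t_1)-X(t_1))\dif u$ followed by independence of increments, and you supply the Riemann-sum and $\pi$-$\lambda$ justification of the independence of $A$ and $B$ that the paper leaves implicit. For ($ii$) the paper instead cites point ($ii$) of Lemma \ref{lemmaUguaglianzaProcessiSpazioFunzioniCadlag} to get $\{X(s+t_1)-X(t_1)\}_s\stackrel{d}{=}X$ on $\mathcal{D}[0,T]$; this is equivalent to your finite-dimensional-distribution argument, which needs independence together with stationarity of the increments, not stationarity alone.
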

See Apeendix \ref{appendiceDimostrazioneIncrementiIntegrale} for the proof of Lemma \ref{lemmaIncrementiIntegraleProcesso}.

\subsection{Poisson case}

For the sake of completeness we recall the following result which follows by classical arguments.

\begin{lemma}\label{lemmaLimiteSerieRappresentanteIntegrale}
	Let $g:\mathbb{N}^2\longrightarrow \mathbb{R}$ and $k:\mathbb{R}\longrightarrow\mathbb{R}$ such that for $\varepsilon >0\ \exists\ N_\varepsilon$ s.t. $|g(h,n) - k(h/n)|<\varepsilon,\ n\ge N_\varepsilon,\ h=1,\dots, n$. Let $f:\mathbb{R}\longrightarrow\mathbb{R}$ s.t. $\int_0^1 |f(x)|\dif x<\infty$ and $\exists\ \int_0^1 f(x)k(x)\dif x$. Then, 
	\begin{equation*}
		\lim_{n\rightarrow\infty} \sum_{h=1}^n \frac{1}{n}f\Big(\frac{h}{n}\Big) g(h,n) = \int_0^1f(x)k(x)\dif x.
	\end{equation*}
\end{lemma}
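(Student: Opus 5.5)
The plan is to compare the Riemann sum with the one in which $g(h,n)$ is replaced by $k(h/n)$, and to show that the replacement costs nothing in the limit. We write
\begin{equation*}
\sum_{h=1}^n \frac{1}{n}f\Big(\frac{h}{n}\Big) g(h,n) = \sum_{h=1}^n \frac{1}{n}f\Big(\frac{h}{n}\Big) k\Big(\frac hn\Big) + \sum_{h=1}^n \frac{1}{n}f\Big(\frac{h}{n}\Big) \Big(g(h,n)-k\Big(\frac hn\Big)\Big).
\end{equation*}
The first sum is a Riemann sum (right endpoints on the uniform partition of $[0,1]$) for the integral $\int_0^1 f(x)k(x)\dif x$, whose existence is part of the hypotheses. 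Read in the Riemann sense, as is implicit in the statement, this integral is the limit of such tagged sums, so the first term tends to $\int_0^1 fk$.

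For the second sum we use the uniform hypothesis. Fix $\varepsilon>0$ and take $n\ge N_\varepsilon$. Then
\begin{equation*}
\Big|\sum_{h=1}^n \frac{1}{n}f\Big(\frac{h}{n}\Big) \Big(g(h,n)-k\Big(\frac hn\Big)\Big)\Big| \le \varepsilon \sum_{h=1}^n \frac1n \Big|f\Big(\frac hn\Big)\Big|.
\end{equation*}
The right-hand side is $\varepsilon$ times a Riemann sum for $\int_0^1|f|$. This sum is bounded uniformly in $n$, say by $\int_0^1|f|+1$, for $n$ large. Letting $n\to\infty$ and then $\varepsilon\to0$ makes the second term vanish, which gives the claim.

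The main obstacle is the justification that the Riemann sums $\frac1n\sum_h f(h/n)k(h/n)$ and $\frac1n\sum_h |f(h/n)|$ converge to the corresponding integrals. For a merely Lebesgue-integrable $f$, point evaluations are not controlled, so the hypotheses must be read as $f$ and $fk$ being Riemann integrable, at least improperly near an endpoint with monotone behaviour. For the applications in the paper, $f$ is a power kernel such as $(1-x)^\alpha$, which is continuous on $[0,1]$, so the sums converge. I would state this reading explicitly and invoke the standard convergence of Riemann sums; everything else is the elementary $\varepsilon$-argument above.
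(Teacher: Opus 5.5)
Your argument is correct, and it is the classical $\varepsilon$-splitting the paper alludes to; the paper writes out no proof. Your insistence on reading the hypotheses as Riemann integrability of $f$ and $fk$ is in fact necessary. With only Lebesgue integrability the statement fails: take $f=\mathds{1}_{\mathbb{Q}}$ and $g\equiv k\equiv 1$, so the sums all equal $1$ while $\int_0^1 f(x)k(x)\dif x=0$.

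One correction to your closing remark. In the proof of Theorem~\ref{teoremaIntegraleProcessoCompostoPoisson}, the role of $f$ is played by the continuous rate function $\lambda$, evaluated at left endpoints $(h-1)t/n$ and paired with complex-valued $g,k$. The power kernel sits inside $g$ and $k$, not in $f$. Continuity is therefore still available, and your argument carries over with only cosmetic changes.
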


Now we prove that the fractional integral of a compound Poisson process (up to a constant) is a compound Poisson random variable.

\begin{theorem}\label{teoremaIntegraleProcessoCompostoPoisson}
Under Condition II, let $M\sim CP(\lambda, X)$ and $\alpha\ge0$. For $t>0$, set $\Lambda_t$ a r.v. such that
\begin{equation}\label{distribuzioneVariabileAleatoriaLambda}
	P\{\Lambda_t\in \dif s\} = \frac{\lambda(s)\dif s}{\int_0^t \lambda(u)\dif u}, \ \ \ s\in[0,t].
\end{equation}
Then,
\begin{equation}\label{integraleProcessoCompostoPoisson}
	\int_0^t (t-s)^\alpha M(s)\dif s \stackrel{d}{=} \sum_{k=1}^{N(t)} X_k\frac{(t-\Lambda_{t,k})^{\alpha+1}}{\alpha+1},\ \ \ t\ge0,
\end{equation}	
where $\Lambda_{t,1}, \Lambda_{t,2},\dots$ are independent copies of $\Lambda_t$, independent of $N$ and of $X_1,X_2,\dots$.
\end{theorem}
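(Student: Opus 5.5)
We take $f(s)=(t-s)^\alpha$ in representation \eqref{formulazioneGeneraleIntegraleProcessoCompound}. Since $\int_{S_k}^t (t-s)^\alpha\,\dif s = (t-S_k)^{\alpha+1}/(\alpha+1)$, we get the pathwise identity
\begin{equation*}
\int_0^t (t-s)^\alpha M(s)\,\dif s = \sum_{k=1}^{N(t)} X_k \frac{(t-S_k)^{\alpha+1}}{\alpha+1}.
\end{equation*}
Here $S_k$ are the arrival times of $N\sim PP(\lambda)$. Condition II is what guarantees that this finite-sum representation holds. The claim is then an identity in law between two random sums. The only difference is that the ordered arrival times $S_1<\dots<S_{N(t)}$ are replaced by i.i.d.\ times $\Lambda_{t,k}$.

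We would prove the identity through the characteristic function \eqref{funzioneCaratteristicaIntegraleGenerale}. Conditionally on $N(t)=n$, the arrival times $(S_1,\dots,S_n)$ of a non-homogeneous Poisson process are distributed as the order statistics of $n$ i.i.d.\ copies of $\Lambda_t$ with density \eqref{distribuzioneVariabileAleatoriaLambda}. This is the classical order-statistics property. We would verify it by computing the joint density
\begin{equation*}
P\{S_1\in \dif s_1,\dots,S_n\in \dif s_n, N(t)=n\} = \prod_k \lambda(s_k)\dif s_k \, e^{-\int_0^t\lambda},
\end{equation*}
dividing by $P\{N(t)=n\}$, and obtaining $n!\prod_k \lambda(s_k)/(\int_0^t\lambda)^n$ on the simplex.

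The conditional expectation inside \eqref{funzioneCaratteristicaIntegraleGenerale} is
\begin{equation*}
\mathbb{E}\Big[\prod_{k=1}^n H_X\big(\gamma (t-S_k)^{\alpha+1}/(\alpha+1)\big)\,\Big|\,N(t)=n\Big].
\end{equation*}
The integrand is a symmetric function of $(S_1,\dots,S_n)$, so we may replace the order statistics by the unordered i.i.d.\ sample $\Lambda_{t,1},\dots,\Lambda_{t,n}$. This turns the expectation into $\big(\mathbb{E} H_X(\gamma (t-\Lambda_t)^{\alpha+1}/(\alpha+1))\big)^n$. The right-hand side of \eqref{integraleProcessoCompostoPoisson} has exactly the same conditional characteristic function given $N(t)=n$, because $X_k$, $\Lambda_{t,k}$ and $N$ are independent. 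Summing over $n$ with weights $P\{N(t)=n\}$ gives equality of the characteristic functions, hence \eqref{integraleProcessoCompostoPoisson}.

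As a byproduct, the right-hand side is a compound Poisson variable with rate $\int_0^t\lambda$ and jump law $X(t-\Lambda_t)^{\alpha+1}/(\alpha+1)$. Its characteristic function is $\exp\{-\int_0^t\lambda(u)\dif u\,(1-\mathbb{E}e^{i\gamma X (t-\Lambda_t)^{\alpha+1}/(\alpha+1)})\}$. The case $t=0$ is trivial.

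The main delicate point is the justification of the pathwise representation and the conditioning step, i.e.\ the order-statistics property for non-homogeneous rates. The rest is routine. Lemma \ref{lemmaLimiteSerieRappresentanteIntegrale} would not be needed for this direct approach.
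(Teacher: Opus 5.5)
Your proof is correct, but for this theorem it takes a different route from the paper. The paper approximates $\int_0^t (t-s)^\alpha M(s)\,\dif s$ by a Riemann sum and uses summation by parts to rewrite it as $\sum_{h=1}^n \big(M(ht/n)-M((h-1)t/n)\big)\, w_{h,n}$, with weights $w_{h,n}=(t/n)^{\alpha+1}\sum_{j=0}^{n-h} j^\alpha$. It then:
\begin{itemize}
\item factorizes the characteristic function using independent increments;
\item inserts the infinitesimal form \eqref{funzioneCaratteristicaIncrementoCompoundPoisson} of each increment and passes from the product to an exponential;
\item identifies the limit through Lemma \ref{lemmaLimiteSerieRappresentanteIntegrale}, after showing via a binomial-series/Faulhaber expansion that $w_{h,n}\sim (t-ht/n)^{\alpha+1}/(\alpha+1)$.
\end{itemize}
Your argument is instead exactly the one the paper uses afterwards for the general-kernel Theorem \ref{teoremaIntegraleProcessoCompostoPoissonGenerale}, specialized to $f(s)=(t-s)^\alpha$. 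That argument combines the pathwise representation \eqref{formulazioneGeneraleIntegraleProcessoCompound}, the order-statistics property (Proposition \ref{proposizioneDistribuzioneCongiuntaIntertempiPoisson}) and the symmetrization step (Proposition \ref{proposizioneVariabiliOrdinateFunzioniCommutative}).

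Your route is exact: it needs no limit interchanges, no product-to-exponential step and no asymptotics of the weights uniform in $h$. It also works verbatim for any $f\in\mathcal{L}^1[0,t]$. The paper's route, by contrast, never uses the arrival-time structure of $N$. It relies only on independent increments and their infinitesimal law, so it is effectively a characteristic-function computation for the Wiener-type integral $\int_0^t \frac{(t-s)^{\alpha+1}}{\alpha+1}\,\dif M(s)$ (integration by parts gives this identity, since $M(0)=0$). Incidentally, your pathwise identity has the correct exponent $\alpha+1$, whereas the paper's display \eqref{rappresentazioneQuasiCertantegraleCompoundPoisson} has $(t-S_k)^\alpha$ by a typo.
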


\begin{proof}
	First, we observe that the characteristic function of the right hand-side of \eqref{integraleProcessoCompostoPoisson} is, for $\gamma \in\mathbb{R}$,
	\begin{equation}
		\mathbb{E}\exp\Bigg( i\gamma \sum_{k=1}^{N(t)} X_k\frac{(t-\Lambda_{t,k})^{\alpha+1}}{\alpha+1} \Bigg)= e^{-\int_0^t \lambda(s)\dif s} \exp\Bigg( \int_0^t \lambda(s) \mathbb{E} e^{i\gamma X \frac{(t-s)^{\alpha+1}}{ \alpha+1} }\dif s \Bigg).
	\end{equation}
	Now, we study the characteristic function of the left hand-side of \eqref{integraleProcessoCompostoPoisson}.
\begin{align}
\int_0^t (t-s)^\alpha M(s)\dif s & = 	\lim_{n\rightarrow \infty} \sum_{k=1}^n \bigg(t-\frac{kt}{n}\bigg)^\alpha M\bigg(\frac{kt}{n}\bigg)\frac{t}{n}\nonumber \\
& =  \lim_{n\rightarrow \infty} \sum_{h=1}^n \Bigg(M\bigg(\frac{ht}{n}\bigg) - M\bigg(\frac{(h-1)t}{n}\bigg) \Bigg) \sum_{k = h}^n \bigg(\frac{t}{n}\bigg)^{\alpha+1}(n-k)^\alpha \nonumber \\
&= \lim_{n\rightarrow \infty} \sum_{h=1}^n \Bigg(M\bigg(\frac{ht}{n}\bigg) - M\bigg(\frac{(h-1)t}{n}\bigg) \Bigg) \sum_{j = 0}^{n-h} \bigg(\frac{t}{n}\bigg)^{\alpha+1} j^\alpha .\label{primaRappresentazioneIntegraleFrazinarioCompound}
\end{align}

Note that for $n$ great enough, then, for $ h=1,\dots,n$ and $\gamma \in\mathbb{R}$,
\begin{align}\label{funzioneCaratteristicaIncrementoCompoundPoisson}
	\mathbb{E} \exp\Bigg(i\gamma \bigg[ M\bigg(\frac{ht}{n}\bigg) - M\bigg(\frac{(h-1)t}{n}\bigg)\bigg] \Bigg) =1-\lambda\bigg(\frac{(h-1)t}{n}\bigg)\frac{t}{n}\Bigg(1-\mathbb{E}e^{i\gamma X}\Bigg)+o\bigg(\frac{t}{n}\bigg).
\end{align}
Then, the characteristic function of \eqref{primaRappresentazioneIntegraleFrazinarioCompound} reads
\begin{align}
	\mathbb{E}\exp\bigg(&i\gamma \int_0^t (t-s)^\alpha M(s)\dif s\bigg)\nonumber \\
	&= \lim_{n\rightarrow\infty}\prod_{h=1}^n\mathbb{E} \exp\Bigg(i\gamma \bigg[M\bigg(\frac{ht}{n}\bigg) - M\bigg(\frac{(h-1)t}{n}\bigg) \bigg] \sum_{j = 0}^{n-h} \bigg(\frac{t}{n}\bigg)^{\alpha+1} j^\alpha \Bigg)\nonumber\\
	& = \lim_{n\rightarrow\infty}\prod_{h=1}^n \Bigg( 1-\lambda\bigg(\frac{(h-1)t}{n}\bigg)\frac{t}{n}\bigg[1-\mathbb{E}\exp\bigg( i\gamma X \sum_{j = 0}^{n-h} \bigg(\frac{t}{n}\bigg)^{\alpha+1} j^\alpha \bigg)\bigg] \Bigg)\nonumber\\
	& = \exp\Bigg( -\lim_{n\rightarrow\infty} \sum_{h=1}^n\lambda\bigg(\frac{(h-1)t}{n}\bigg)\frac{t}{n}\bigg[1-\mathbb{E}\exp\bigg( i\gamma X \sum_{j = 0}^{n-h} \bigg(\frac{t}{n}\bigg)^{\alpha+1} j^\alpha \bigg)\bigg] \Bigg) \nonumber\\ % COMMENTA
	& = e^{-\int_0^t \lambda(s)\dif s} \exp\Bigg(\lim_{n\rightarrow\infty} \sum_{h=1}^n \lambda\bigg(\frac{(h-1)t}{n}\bigg)\frac{t}{n} \mathbb{E}\exp\bigg( i\gamma X \sum_{j = 0}^{n-h} \bigg(\frac{t}{n}\bigg)^{\alpha+1} j^\alpha \bigg) \Bigg) ,\nonumber
\end{align}
where in the first step we used \eqref{primaRappresentazioneIntegraleFrazinarioCompound}, the dominated convergence theorem to exchange limit and expectation and the independence of the increments of $M$.  In the second step we used \eqref{funzioneCaratteristicaIncrementoCompoundPoisson}. In the third step we used that if $\lim_{n\rightarrow\infty} \sum_{k=1}^n \frac{a_{k,n}}{n}<\infty$ and $\sum_{k=1}^n \frac{a^2_{k,n}}{n^2}=0$ (which can be easily checked), then
$$\lim_{n\rightarrow\infty} \prod_{k=1}^n\Big(1+\frac{a_{k,n}}{n}\Big) = \exp\Bigg( \lim_{n\rightarrow\infty}\sum_{k=1}^n \frac{a_{k,n}}{n}\Bigg).$$

Now, it remains to prove that
\begin{equation}
	\lim_{n\rightarrow\infty} \sum_{h=1}^n \lambda\bigg(\frac{(h-1)t}{n}\bigg)\frac{t}{n} \mathbb{E}\exp\bigg( i\gamma X \sum_{j = 0}^{n-h} \bigg(\frac{t}{n}\bigg)^{\alpha+1} j^\alpha \bigg) = \int_0^t \lambda(s) \mathbb{E} e^{i\gamma X \frac{(t-s)^\alpha}{ \alpha+1}  } \dif s
	\end{equation}

In light of Lemma \ref{lemmaLimiteSerieRappresentanteIntegrale}, it sufficies to show that $ \sum_{j = 0}^{n-h} \big(\frac{t}{n}\big)^{\alpha+1} j^\alpha \sim \frac{(t-ht/n)^{\alpha+1}}{\alpha+1}$ as $n$ increases. Indeed,

\begin{align}
	\sum_{j=0}^{n-h} \frac{j^\alpha}{n^{\alpha+1}}  & = \frac{1}{n}\sum_{j=0}^{n-h}\sum_{k\ge0} \frac{\Gamma(\alpha+1)}{k!\Gamma(\alpha+1-k)}  \Big(\frac{j}{n}-1\Big)^k \nonumber \\
	&= \frac{1}{n}\sum_{j=0}^{n-h}\sum_{k\ge0} \frac{\Gamma(\alpha+1)}{k!\Gamma(\alpha+1-k)}  \sum_{i=0}^k \binom{k}{i} (-1)^{k-i} \Big(\frac{j}{n}\Big)^i \nonumber\\ % COMMENTA
	& = \frac{1}{n}\sum_{k\ge0} \frac{\Gamma(\alpha+1)}{k!\Gamma(\alpha+1-k)}  \sum_{i=0}^k \binom{k}{i} (-1)^{k-i} \sum_{j=0}^{n-h}\Big(\frac{j}{n}\Big)^i \nonumber\\
	&= \sum_{k\ge0} \frac{\Gamma(\alpha+1)}{k!\Gamma(\alpha+1-k)}  \sum_{i=0}^k \binom{k}{i} (-1)^{k-i} \frac{1}{i+1}\sum_{j=0}^i \binom{i+1}{j} (-1)^j B_j \frac{(n-h)^{i+1-j}}{n^{i+1}}, \label{espansioneSommatoriaPotenzaReale} %\label{passaggioFormulaFaulhaber}
\end{align}
where in the last step we used the Faulhaber's formula $\sum_{j=1}^m j^i = \sum_{j=0}^i \binom{i+1}{j} (-1)^j B_j m^{i+1-j} / (i+1)$, $i\ge1$, where $B_j$ are the Bernoulli numbers with $B_0 = 1$. Then, for $n$ big enough the last sum in \eqref{espansioneSommatoriaPotenzaReale} reduces to the term with $j=0$, i.e.
\begin{align}
	\sum_{j=0}^{n-h} \frac{j^\alpha}{n^{\alpha+1}}  & \sim \sum_{k\ge0} \frac{\Gamma(\alpha+1)}{k!\Gamma(\alpha+1-k)}  \sum_{i=0}^k \binom{k}{i} (-1)^{k-i} \frac{1}{i+1} \bigg(1-\frac{h}{n}\bigg)^{i+1}  \nonumber\\
	& = \frac{1}{\alpha+1}\sum_{k\ge0} \frac{\Gamma(\alpha+2)}{(k+1)!\Gamma(\alpha+1-k)} (-1)^{k+1} \sum_{i=0}^k \binom{k+1}{i+1} (-1)^{i+1} \bigg(1-\frac{h}{n}\bigg)^{i+1}  \nonumber\\
	& =  \frac{1}{\alpha+1}\sum_{k\ge0} \frac{\Gamma(\alpha+2)}{(k+1)!\Gamma(\alpha+1-k)} (-1)^{k+1} \Bigg( \bigg(\frac{h}{n}\bigg)^{k+1}- 1\Bigg) \nonumber\\
	& = \frac{1}{\alpha+1} \bigg(1-\frac{h}{n}\bigg)^{\alpha +1},\nonumber
\end{align}
which completes the proof.
\end{proof}

\begin{remark}[Iterated integrals]
	In light of the Cauchy integral formula, the iterated integral of $M\sim CP(\lambda, X)$ is, with $t\ge0$ and $n\in\mathbb{N}$,
	\begin{align*}
		\int_0^t\dif s_1\int_0^{s_1} \dif s_2\cdots \int_0^{s_{n-1}}M(s_n)\dif s_n &= \frac{1}{(n-1)!}\int_0^t M(s)(t-s)^{n-1} \dif s \nonumber\\
		& \stackrel{d}{=} \frac{t^n}{n!} \sum_{k=1}^{N(t)} X_k \Big(1-\frac{\Lambda_{t,k}}{t}\Big)^{n} \stackrel{p}{\longrightarrow}0, \ \ \ n\rightarrow\infty,
	\end{align*}
	where the limit follows by assuming that $N$ does not explode (see Condition II) since $0<1-\Lambda_{t,k}/t<1$ a.s. \hfill $\diamond$
\end{remark}

\begin{remark}[Homogeneous case]\label{remarkIntegraleCompoundOmogeneo}
	If $\lambda(t) = \lambda ,\ t\ge0$, the formula \eqref{integraleProcessoCompostoPoisson} simplifies. Indeed, $\Lambda_t \sim Unif(0,t)$ and $t-\Lambda_t \sim Unif(0,t)$. Thus, denoting by $U_k\sim Unif(0,1)$ i.i.d. r.v.s independent of $N$ and $X_1,\dots$,
	\begin{equation}\label{integraleFrazionarioCompoundOmogeneo}
		\int_0^t M(s)(t-s)^\alpha \dif s \stackrel{d}{=} \frac{t^{\alpha+1}}{\alpha+1} \sum_{k=1}^{N(t)} X_k U_k^{\alpha+1},\ \ \ t\ge0,\ \alpha\ge0.
	\end{equation}
	Note that the case of $\alpha = 0$ already appeared in \cite{CO202}.
	
	We readily derive the following limit result. Without any loss of generality, let $\mathbb{E}X =0$ and $\mathbb{E}X^2 <\infty$, then as $t\longrightarrow \infty$,
	\begin{align*}
		\lim_{t\rightarrow\infty} \frac{1}{t^{\beta}} \int_0^t M(s) \dif s \stackrel{d}{=} \frac{1}{t^{\beta-1}} \sum_{k=1}^{N(t)} X_k U_k 	
		 \longrightarrow \begin{cases}
			\begin{array}{l l}
				\displaystyle \stackrel{p}{\longrightarrow} 0, & \beta = 2,\\
				\displaystyle \stackrel{d}{\longrightarrow} Z\sim Normal(0, \lambda \mathbb{E}X^2 / 3\big), & \beta = 3/2,
			\end{array}
		\end{cases}
	\end{align*}
	where we used \eqref{integraleFrazionarioCompoundOmogeneo} and the limit theorems for the sum of a random number of i.i.d. terms. \hfill$\diamond$
\end{remark}

Now, we point out an interesting connection between Theorem \ref{teoremaIntegraleProcessoCompostoPoisson} and formula \eqref{formulazioneGeneraleIntegraleProcessoCompound} with $f(s) = (t-s)^\alpha$. Indeed, the latter turns into
\begin{equation}\label{rappresentazioneQuasiCertantegraleCompoundPoisson}
	\int_0^t M(s)(t-s)^{\alpha}\dif s = \sum_{k=1}^{N(t)} X_k \frac{(t-S_k)^\alpha}{\alpha+1},\ \ \ t\ge0,
\end{equation} 
which resembles \eqref{integraleProcessoCompostoPoisson}, where the interarrival times $S_k$ are replaced by the r.v.s $\Lambda_{t,k}$. The following results explore this idea and permit us to generalize Theorem \ref{teoremaIntegraleProcessoCompostoPoisson}. We also point out that formula \eqref{rappresentazioneQuasiCertantegraleCompoundPoisson} is a particular shot noise process, whose basic formulation is $S(t) = \sum_{k=1}^{N(t)} X_k f(t-S_k)$ with $f$ usually decreasing (see for instance \cite{BCS2026} and references therein).

\begin{proposition}\label{proposizioneDistribuzioneCongiuntaIntertempiPoisson}
	Let $N\sim PP(\lambda)$. Let $t>0$, $\Lambda_{t,1},\dots$ i.i.d r.v.s distributed as in \eqref{distribuzioneVariabileAleatoriaLambda}. Then, for $n\in\mathbb{N}$ and $0\le s_1<s_2<\dots<s_n\le t$,
	\begin{equation}
		P\{S_1\in\dif s_1,\dots,S_n\in \dif s_n\,|\,N(t) = n\} = P\{\Lambda_{t,(1)}\in\dif s_1, \dots, \Lambda_{t, (n)}\in\dif s_n\},
	\end{equation}
	with $\Lambda_{t,(1)},\dots,\Lambda_{t,(n)}$ the (increasingly) ordered $\Lambda_{t,1},\dots,\Lambda_{t,n}$.
\end{proposition}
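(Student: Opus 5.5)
The plan is to compute both sides as explicit densities on the ordered simplex $\{0\le s_1<\dots<s_n\le t\}$ and check that they coincide. Write $\Lambda(u)=\int_0^u\lambda(v)\dif v$.

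\emph{Step 1: joint density of the arrival times.} For the non-homogeneous Poisson process $N\sim PP(\lambda)$, the event $\{S_1\in\dif s_1,\dots,S_n\in\dif s_n,\ N(t)=n\}$ means the following. There are no arrivals on each gap $(s_{k-1},s_k)$, with $s_0=0$. There is exactly one arrival in each $\dif s_k$. There are no arrivals on $(s_n,t]$. By independence of increments and the infinitesimal description \eqref{compoundPoissonInfinitesimale} with $F=\delta_1$, each gap contributes $e^{-(\Lambda(s_k)-\Lambda(s_{k-1}))}$ and each arrival contributes $\lambda(s_k)\dif s_k+o(\dif s_k)$. The last interval contributes $e^{-(\Lambda(t)-\Lambda(s_n))}$. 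The exponentials telescope, so
\[
P\{S_1\in\dif s_1,\dots,S_n\in\dif s_n,\ N(t)=n\}=e^{-\Lambda(t)}\prod_{k=1}^n\lambda(s_k)\dif s_k .
\]

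\emph{Step 2: condition on $N(t)=n$.} Since $P\{N(t)=n\}=e^{-\Lambda(t)}\Lambda(t)^n/n!$, dividing gives the conditional density
\[
n!\prod_{k=1}^n\frac{\lambda(s_k)}{\Lambda(t)}
\]
on the ordered simplex.

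\emph{Step 3: order statistics of the $\Lambda_{t,k}$.} The $\Lambda_{t,k}$ are i.i.d.\ with the absolutely continuous density \eqref{distribuzioneVariabileAleatoriaLambda}, so ties occur with probability zero. Each ordered configuration $(s_1<\dots<s_n)$ arises from exactly $n!$ permutations of the unordered sample. Hence the joint density of $(\Lambda_{t,(1)},\dots,\Lambda_{t,(n)})$ is $n!\prod_k\lambda(s_k)/\Lambda(t)$ on the same simplex, which matches Step 2.

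\emph{Main obstacle.} The only delicate point is making Step 1 rigorous rather than infinitesimal. I would do this by computing
\[
P\{S_k\in(s_k,s_k+h_k],\ k\le n,\ N(t)=n\}
\]
for small disjoint intervals. This probability is a product of Poisson probabilities of the independent increments over the intervals $(s_{k-1}+h_{k-1},s_k]$, $(s_k,s_k+h_k]$ and $(s_n+h_n,t]$. Dividing by $\prod h_k$ and letting $h_k\to0$ then uses only the continuity of $\lambda$, which gives $\Lambda(s_k+h_k)-\Lambda(s_k)=\lambda(s_k)h_k+o(h_k)$.
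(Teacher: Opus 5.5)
Your proposal is correct and follows essentially the same route as the paper. The paper also writes the conditional probability as the product of the independent-increment probabilities (no arrivals on the gaps, one arrival in each $\dif s_k$) divided by $P\{N(t)=n\}$, obtains $n!\prod_{k=1}^n\lambda(s_k)\dif s_k/\big(\int_0^t\lambda(s)\dif s\big)^n$, and recognizes this as the joint density of $\Lambda_{t,(1)},\dots,\Lambda_{t,(n)}$; your small-interval argument with $h_k\to0$ makes rigorous what the paper leaves at the infinitesimal level.
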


\begin{proof}
	\begin{align*}
		P\{S_1\in \dif s_1,&\dots,S_n\in \dif s_n\,|\,N(t) = n\} \\
		&= \frac{P\{N[o,s_1) = 0,N[s_1,s_1+\dif s_1) = 1,\dots, N[s_n,s_n+\dif s_n) = 1,N[s_n+\dif s_n,t) = 0\}}{P\{N(t) = n\}} \\
		&= \frac{n!}{\big(\int_0^t\lambda(s)\dif s\big)^n} \prod_{k=1}^n\lambda(s_k)\dif s_k \\
		&= 	P\{\Lambda_{t,(1)}\in\dif s_1, \dots, \Lambda_{t, (n)}\in\dif s_n\}.
	\end{align*}
\end{proof}

\begin{proposition}\label{proposizioneVariabiliOrdinateFunzioniCommutative}
	Let $n\in\mathbb{N}$ and $t>0$. Let $\Lambda_1,\dots,\Lambda_n$ i.i.d. r.v.s taking values in $[0,t]$ a.s., $X_1,\dots,X_n$ independent copies of a real r.v. $X$ and $f:\mathcal{R}\times [0,t] \longrightarrow \mathbb{R}$ measurable. Then,
	\begin{equation}
		\sum_{k=1}^n f\Big(X_k, \Lambda_{(k)}\Big) \stackrel{d}{=} \sum_{k=1}^n f\big( X_k,\Lambda_k \big).
	\end{equation}
\end{proposition}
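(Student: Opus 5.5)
The idea is that the $X_k$ are i.i.d. and independent of the $\Lambda$'s. Once we condition on the order statistics, the labels attached to the $X$'s can be permuted without changing the joint law. I would prove the statement at the level of joint laws: the random vector
$$\big(X_1,\Lambda_{(1)}\big),\dots,\big(X_n,\Lambda_{(n)}\big)$$
taken as an unordered multiset of pairs, has the same law as the multiset $\{(X_k,\Lambda_k)\}_{k=1}^n$. Since $\sum_{k} f(X_k,\Lambda_k)$ is a symmetric function of the pairs, equality in distribution follows.

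Concretely, let $\sigma$ be the random permutation that sorts the $\Lambda$'s, so that $\Lambda_{(k)}=\Lambda_{\sigma(k)}$. Ties occur only on a set where we fix any measurable tie-breaking rule, e.g. smallest index first. Then
$$\sum_{k=1}^n f\big(X_k,\Lambda_{(k)}\big)=\sum_{k=1}^n f\big(X_k,\Lambda_{\sigma(k)}\big)=\sum_{j=1}^n f\big(X_{\sigma^{-1}(j)},\Lambda_j\big).$$
Here $\sigma$ is a measurable function of $(\Lambda_1,\dots,\Lambda_n)$ only, hence it is independent of $(X_1,\dots,X_n)$. The key step is to show that
$$\big(X_{\sigma^{-1}(1)},\dots,X_{\sigma^{-1}(n)},\Lambda_1,\dots,\Lambda_n\big)\stackrel{d}{=}\big(X_1,\dots,X_n,\Lambda_1,\dots,\Lambda_n\big).$$

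To prove this, I would compute the joint characteristic function, or equivalently $\mathbb{E}[g(X_{\sigma^{-1}(\cdot)})h(\Lambda)]$ for bounded measurable $g,h$, by conditioning on $(\Lambda_1,\dots,\Lambda_n)$. Conditionally, $\sigma$ is a fixed permutation $\pi$. Because $X$ is independent of $\Lambda$, the conditional law of $(X_{\pi^{-1}(1)},\dots,X_{\pi^{-1}(n)})$ is the law of a permutation of an i.i.d. vector, which is again $F^{\otimes n}$. Hence
$$\mathbb{E}\big[g(X_{\sigma^{-1}(\cdot)})\,|\,\Lambda\big]=\mathbb{E}g(X_1,\dots,X_n),$$
and the two joint laws agree. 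Applying the measurable map $(x,\ell)\mapsto\sum_j f(x_j,\ell_j)$ then gives the claim.

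The only delicate point is measurability and independence: $\sigma$ must be measurable in $\Lambda$, and the conditioning argument must be justified. I would handle it by decomposing over the finitely many events $\{\sigma=\pi\}$, $\pi\in S_n$. Each such event is in $\sigma(\Lambda)$, so
$$\mathbb{E}\big[g(X_{\sigma^{-1}})h(\Lambda)\big]=\sum_{\pi}\mathbb{E}\big[g(X_{\pi^{-1}})\big]\,\mathbb{E}\big[h(\Lambda)\mathds{1}(\sigma=\pi)\big]=\mathbb{E}g(X)\,\mathbb{E}h(\Lambda),$$
using exchangeability of the i.i.d. $X$'s. Note that the i.i.d. assumption on the $\Lambda$'s is not even needed; only the independence of the $X$'s from the $\Lambda$'s and the exchangeability of the $X$'s matter.
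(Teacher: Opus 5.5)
Your proof is correct, but it takes a different route from the paper. The paper works entirely with characteristic functions. Since the pairs $(X_k,\Lambda_k)$ are i.i.d., it writes $\mathbb{E}e^{i\gamma\sum_k f(X_k,\Lambda_k)}$ as the $n$-th power of $\int_0^t \mathbb{E}e^{i\gamma f(X,\lambda)}P\{\Lambda\in\dif\lambda\}$. It expands that power over the ordered simplex via $(\int_0^t g)^n=n!\int_0^t\int_{x_1}^t\cdots\int_{x_{n-1}}^t\prod_k g(x_k)\dif x_k$. Finally, it recognizes $n!\prod_k P\{\Lambda\in\dif\lambda_k\}$ on the simplex as the joint law of $(\Lambda_{(1)},\dots,\Lambda_{(n)})$.

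You instead reindex the sum pathwise through the sorting permutation $\sigma$. You then prove the stronger identity $(X_{\sigma^{-1}(1)},\dots,X_{\sigma^{-1}(n)},\Lambda_1,\dots,\Lambda_n)\stackrel{d}{=}(X_1,\dots,X_n,\Lambda_1,\dots,\Lambda_n)$ by splitting over the finitely many events $\{\sigma=\pi\}$ and using exchangeability of the $X$'s.

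Your approach buys generality. The paper's simplex identity is stated for Lebesgue measure, and it transfers to $P\{\Lambda\in\dif\lambda\}$ only when the law of $\Lambda$ has no atoms. So ties are implicitly excluded there; this is harmless in the application, since $\Lambda_t$ has the density \eqref{distribuzioneVariabileAleatoriaLambda}. Your tie-breaking rule covers arbitrary laws. You also need neither the i.i.d. structure of the $\Lambda$'s nor more than exchangeability of the $X$'s. Your joint-law equality also gives at once the paper's subsequent remark about arbitrary commutative functionals $g$.

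The paper's computation, in turn, is shorter within its characteristic-function framework. It also makes explicit the order-statistics density that links this result to Proposition~\ref{proposizioneDistribuzioneCongiuntaIntertempiPoisson}.

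Both arguments use that $(X_1,\dots,X_n)$ is independent of $(\Lambda_1,\dots,\Lambda_n)$, which the statement leaves implicit. You were right to state it, because the claim can fail without it: take $X_k=\Lambda_k$ and $f(x,\lambda)=\mathds{1}(x=\lambda)$.
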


Note that the underlying reason is that the sum is commutative and, thanks to the independence and identical distribution of the $X_k$, the order of the terms does not matter. Indeed, we could replace the summation with any commutative (with respect to the inputs) function $g$, obtaining 
$$ g\Bigg(f\Big(X_1, \Lambda_{(1)}\Big),\dots, f\Big(X_n, \Lambda_{(n)}\Big) \Bigg) \stackrel{d}{=} g\Bigg(f\Big(X_1, \Lambda_{1}\Big),\dots, f\Big(X_n, \Lambda_{n}\Big) \Bigg). $$

\begin{proof}
	
	Let $\gamma\in\mathbb{R}$, and $\Lambda$ distributed as $\Lambda_k$,
	\begin{align}
		\mathbb{E} \exp\Bigg(i\gamma\sum_{k=1}^n f(X_k,\Lambda_k) \Bigg) &= \left( \mathbb{E} e^{i\gamma f(X,\Lambda) }\right)^n \nonumber\\
		& =  \Bigg( \int_0^t \mathbb{E} e^{i\gamma f(X,\lambda)}P\{\Lambda \in\dif \lambda\} \Bigg)^n \nonumber\\
		& = n! \int_0^t \int_{\lambda_1}^t \cdots \int_{\lambda_{n-1}}^t \prod_{k=1}^n \Bigg(\mathbb{E} e^{i\gamma f(X,\lambda_k)}P\{\Lambda \in\dif \lambda_k\} \Bigg), \label{passaggioUsoFormulaPotenzaIntegrale} \\
		&=  n! \int_0^t \int_{\lambda_1}^t \cdots \int_{\lambda_{n-1}}^t \mathbb{E} \exp\Big(i\gamma \sum_{k=1}^n f(X_k,\lambda_k)  \Big) \prod_{k=1}^n  P\{\Lambda \in\dif \lambda_k\} \nonumber \\
		& = \mathbb{E} \exp\Bigg(i\gamma\sum_{k=1}^n f(X_k,\Lambda_{(k)}) \Bigg),\nonumber
	\end{align}
	%\begin{lemma}	
	where in the second last equality we used that $X_1,\dots,X_n$ are independent copies of $X$ and in \eqref{passaggioUsoFormulaPotenzaIntegrale} we used that, if $f:[0,\infty)\longrightarrow \mathbb{C}$, then, for $t>0$ and $n\in\mathbb{N}$,
	\begin{equation}
		\Biggl(\int_0^t f(x)\dif x \Bigg)^n = n! \int_0^t f(x_1) \dif x_1\int_{x_1}^t f(x_2)\dif x_2 \cdots\int_{x_{n-1}}^t f(x_n)\dif x_n,
	\end{equation}
	which can be proved by induction.
	%\end{lemma}
	\end{proof}

Now, equation \eqref{formulazioneGeneraleIntegraleProcessoCompound}, Proposition \ref{proposizioneDistribuzioneCongiuntaIntertempiPoisson} and Proposition \ref{proposizioneVariabiliOrdinateFunzioniCommutative} yield the following theorem, meaning that for fixed $t\in[0, T]$ the integral \eqref{formulazioneGeneraleIntegraleProcessoCompound} of a compound Poisson process is a compoud Poisson random variable.

\begin{theorem}\label{teoremaIntegraleProcessoCompostoPoissonGenerale}
	Let $T>0$ and measurable $f\in \mathcal{L}^1[0,T]$. Under the hypothesis of Theorem \ref{teoremaIntegraleProcessoCompostoPoisson}, 
	\begin{equation}\label{integraleGeneraleProcessoCompostoPoisson}
		\int_0^t  M(s)f(s)\dif s \stackrel{d}{=} \sum_{k=1}^{N(t)} X_k\int_{\Lambda_{t,k}}^t f(s)\dif s,\ \ \ 0\le t \le T.
	\end{equation}	
\end{theorem}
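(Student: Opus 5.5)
The plan is to combine the pathwise representation \eqref{formulazioneGeneraleIntegraleProcessoCompound} with Proposition \ref{proposizioneDistribuzioneCongiuntaIntertempiPoisson} and Proposition \ref{proposizioneVariabiliOrdinateFunzioniCommutative}, working conditionally on $N(t)$.

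Fix $t\in[0,T]$; the case $t=0$ is trivial, so take $t>0$. Since $M(s)=\sum_{k=1}^{N(s)}X_k$ with the $X_k$ i.i.d. and independent of $N$, formula \eqref{formulazioneGeneraleIntegraleProcessoCompound} (valid under Condition II and $f\in\mathcal{L}^1[0,T]$) gives almost surely
\begin{equation*}
\int_0^t M(s)f(s)\dif s=\sum_{k=1}^{N(t)}X_k\,g(S_k),\qquad g(u)=\int_u^t f(s)\dif s,\ u\in[0,t].
\end{equation*}
The function $g$ is continuous, hence measurable. The problem is therefore reduced to identifying the law of $\sum_{k\le N(t)}X_k g(S_k)$.

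Take $\gamma\in\mathbb{R}$ and condition on $N(t)=n$. Because $(X_k)$ is independent of $N$, and hence of $(S_1,\dots,S_n)$ and of $\{N(t)=n\}$, the conditional law of $(X_1,\dots,X_n,S_1,\dots,S_n)$ given $N(t)=n$ is the product of the law of $(X_1,\dots,X_n)$ and the conditional law of the arrival times. By Proposition \ref{proposizioneDistribuzioneCongiuntaIntertempiPoisson}, that conditional law coincides with the law of the order statistics $(\Lambda_{t,(1)},\dots,\Lambda_{t,(n)})$, taken independent of the $X$'s. Hence
\begin{equation*}
\mathbb{E}\Big[e^{i\gamma\sum_{k=1}^{n}X_kg(S_k)}\,\Big|\,N(t)=n\Big]=\mathbb{E}\,e^{i\gamma\sum_{k=1}^nX_kg(\Lambda_{t,(k)})}.
\end{equation*}
Proposition \ref{proposizioneVariabiliOrdinateFunzioniCommutative}, applied with the measurable function $(x,u)\mapsto xg(u)$, replaces the ordered variables by the unordered i.i.d. $\Lambda_{t,k}$. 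This yields $\mathbb{E}\,e^{i\gamma\sum_{k=1}^n X_k g(\Lambda_{t,k})}$.

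Multiplying by $P\{N(t)=n\}$ and summing over $n\ge0$ recovers, as in \eqref{funzioneCaratteristicaIntegraleGenerale}, the characteristic function of the right-hand side of \eqref{integraleGeneraleProcessoCompostoPoisson}. The key fact used here is that $\Lambda_{t,k}$, $X_k$ and $N$ are mutually independent. Equality of characteristic functions then gives equality in law.

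As a consistency check, one can also compute $\mathbb{E}e^{i\gamma X g(\Lambda_t)}$ explicitly to obtain the compound Poisson form $\exp\{-\int_0^t\lambda(s)\dif s+\int_0^t\lambda(s)\mathbb{E}e^{i\gamma Xg(s)}\dif s\}$. With $f(s)=(t-s)^\alpha$ this matches Theorem \ref{teoremaIntegraleProcessoCompostoPoisson}.

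The main obstacle is the justification of the pathwise formula \eqref{formulazioneGeneraleIntegraleProcessoCompound} itself. The rearrangement $\sum_n X_n\int_{S_n}^{S_{n+1}}f=\sum_k Y_k\int_{S_k}^t f$ needs finitely many jumps on $[0,t]$, or absolute summability in the cases allowed to explode by Condition II, and this is exactly what Condition II and the Appendix provide. The conditioning step is routine once the independence of the $X_k$ from $N$ is made explicit.
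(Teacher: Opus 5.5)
Your proposal is correct and follows essentially the same route as the paper. The paper obtains the theorem directly from the pathwise representation \eqref{formulazioneGeneraleIntegraleProcessoCompound}, the order-statistics description of the arrival times given $N(t)=n$ (Proposition \ref{proposizioneDistribuzioneCongiuntaIntertempiPoisson}), and the symmetrization result Proposition \ref{proposizioneVariabiliOrdinateFunzioniCommutative}; your write-up just makes explicit the conditioning on $N(t)$ and the mutual independence of $X_k$, $\Lambda_{t,k}$ and $N$, which the paper leaves implicit.
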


We point out that the previous result extends to the shot noise processes as well, meaning that $S(t) = \sum_{k=1}^{N(t)} X_k f(t-S_k) \stackrel{d}{=} \sum_{k=1}^{N(t)} X_k f(t-\Lambda_{t,k})$. Furthermore, in view of point ($i$) of Remark \ref{remarkUguaglianzaSuSpazioFunzioniContinueCompoundDiCompound} the compound of a compound Poisson process still is a compound Poisson process (since the equality in distribution over $\mathcal{D}[0,T]$) and therefore Theorem \ref{teoremaIntegraleProcessoCompostoPoissonGenerale} suitably holds for the integral of the "iterated" compound Poisson processes.

\begin{remark}
	We show that, in general, both relationship \eqref{integraleProcessoCompostoPoisson} and \eqref{integraleGeneraleProcessoCompostoPoisson} do not hold on $\mathcal{D}[0,T]$. It sufficies to consider the case of homogeneous $M\sim CP(\lambda, X)$ and $f\equiv 1$. In light of Remark \ref{remarkIntegraleCompoundOmogeneo} and formula  \eqref{incrementoIntegraleProcessoStocasticoIncrementiIndipendentiStazionario}, with $0\le t_1<t_2\le T$ and $\gamma \in\mathbb{R}$, the increment of the integral is such that
	\begin{equation}\label{incrementoVeroIntegralePoissonCompostoOmogeneo}
		\mathbb{E}e^{i\gamma \int_{t_1}^{t_2} M(s)\dif s} = \exp\Big(-\lambda t_2 + \lambda t_1 \mathbb{E} e^{i\gamma (t_2-t_1)X} +\lambda(t_2 -t_1)\mathbb{E}e^{i\gamma (t_2-t_1) XU}  \Big),
	\end{equation}
	with $U\sim Unif(0,1)$ being independent of $X$. On the other hand, denoting by $Z(t) = t\sum_{k=1}^{N(t)} X_k U_k, \ 0\le t\le T,$ as in \eqref{integraleFrazionarioCompoundOmogeneo}, we have 
	\begin{align}
		\mathbb{E}e^{i\gamma \big(Z(t_2)-Z(t_1)\big) } & = \mathbb{E} \exp\Bigg(i\gamma (t_2-t_1)\sum_{k=1}^{N(t_1)}X_kU_k + i\gamma t_2 \sum_{k=1}^{N(t_2)-N(t_1)} X_kU_k  \Bigg) \nonumber\\
		& = \exp\Bigg( -\lambda t_2 +\lambda t_1\Big( \mathbb{E}e^{i\gamma (t_2-t_1)XU}\big)+\lambda(t_2 -t_1)\mathbb{E}e^{i\gamma t_2 XU} \Bigg), \nonumber %\label{incrementoRappresentazioneIntegralePoissonCompostoOmogeneo}
	\end{align}
	which in general differs from \eqref{incrementoVeroIntegralePoissonCompostoOmogeneo}. \hfill$\diamond$
\end{remark}

\begin{remark}[Limit behaviour]
	Inspired by the Kac's hydrodynamic limit conditions, Theorem \ref{teoremaIntegraleProcessoCompostoPoissonGenerale} permits us to derive the limit result when the rate function of the underlying Poisson process goes to infinity. Let $\beta >0$ and $M\sim CP(\lambda, X)$ with integrable $\lambda$ such that $\lambda(t) = \beta \xi(t),\ t\ge0$. Now, the r.v. $\Lambda_{t}$, having density \eqref{distribuzioneVariabileAleatoriaLambda}, does not depend on $\beta$ (since its distribution does not). Furthermore, $N(t)/\beta \stackrel{p,L^1}{\longrightarrow} \xi(t)\ \forall\ t$ (see for instance Corollary 2.1 of \cite{CO202}). Hence, we have the following limit result, for $\mathbb{E}X = 0$ and $\mathbb{E}X^2<\infty$,
	\begin{equation}\label{comportamentoLimiteTipoKAc}
		\frac{1}{\beta^\alpha} \int_0^t M(s)f(s)\dif s 
	 \longrightarrow \begin{cases}
			\begin{array}{l l}
				\displaystyle \stackrel{p}{\longrightarrow} 0, & \alpha = 1,\\
				\displaystyle \stackrel{d}{\longrightarrow} Z\sim Normal\Bigg(0, \xi(t) \mathbb{E}X^2 \mathbb{E}\bigg(\int_{\Lambda_t}^t f(s)\dif s\bigg)^2\Bigg), & \alpha = 1/2,
			\end{array}
		\end{cases}
	\end{equation}
	as $\beta\longrightarrow\infty$.
	\\
	
	As an example, inspired by the paper \cite{MS2025} (also see references therein), we consider $\lambda(t) = \beta /t^{\alpha}$ with $\alpha \in(0,1)$ and $f(s) = s^r$ with $r>0$. Thus, $\mathbb{E}\Lambda_t^u = t^u \frac{1-\alpha}{1-\alpha+u}, \ u\ge 1$. In this case, 
	\begin{align*}
		\mathbb{E} \bigg(\int_{\Lambda_t}^t s^r\dif s\bigg)^2 & = \frac{t^{2(r+1)}-2t^{r+1}\mathbb{E}\Lambda_t^{r+1}+\mathbb{E}\Lambda_t^{2(r+1)}}{(r+1)^2} \\
		&=\frac{t^{2(r+1)}}{(r+1)^2} \bigg( 1-\frac{2(1-\alpha)}{1-\alpha+r+1}+\frac{1-\alpha}{1-\alpha+2(r+1)}\bigg).
		\end{align*}
		Finally, by means of \eqref{comportamentoLimiteTipoKAc},
$$\frac{1}{\sqrt{\beta}} \int_0^t M(s)s^r\dif s \stackrel{d}{\longrightarrow} Z\sim Normal\Bigg(0, \mathbb{E}X^2 \frac{t^{2(r+1)-\alpha}}{(r+1)^2} \bigg( 1-\frac{2(1-\alpha)}{2-\alpha+r}+\frac{1-\alpha}{3-\alpha+2r)}\bigg)\Bigg),  $$
	as $\beta \longrightarrow\infty$.
	\hfill$\diamond$
\end{remark}

\subsubsection{Governing partial difference-differential equation}

Under Condition II, let $T>0$ and $Y(t) = \int_0^t g\big(M(s)\big) f(s)\dif s,\ 0\le t\le T,$ where $M\sim CP(\lambda, X)$, $f\in \mathcal{L}^1\big([0,T]\big)$ and measurable $g: \mathbb{R}\longrightarrow \mathbb{R}$, $g\in\mathcal{L}^1(\mathbb{R})$. Following the line of \cite{GMn1971, Mn1970, P1966, P1968}, we study the equation governing the probability law of the vector process $(M,Y)$, $p(t,\dif x, y) \dif y = P\{M(t)\in\dif x, Y(t)\in\dif y\}$ (where the $\dif x$ term must be interpreted as in \eqref{compoundPoissonInfinitesimale}). For $t\ge0,\ x,y \in\mathbb{R},$

\begin{align}
	p(t+\dif t,\dif x, y) &= P\{M(t)+M[t,t+\dif t) \in \dif x, Y(t)\in\dif y - \dif t g\big(M(t)\big)f(t) \} \nonumber\\
	& = \int_\mathbb{R} P\{M(t)\in \dif z, Y(t)\in \dif y-g(z)f(t)\}P\{M[t,t+\dif t)\in\dif x-z\} \nonumber\\
	& = \int_\mathbb{R} p\big(t, \dif z, y-\dif t g(z)f(t)\big)\lambda(t)\dif t F(\dif x -z)\nonumber \\
	&\ \ \ +p\big(t, \dif x, y-\dif t g(x) f(t)\big) \big(1-\lambda(t)\dif t\big)+ o(\dif t),\nonumber
\end{align}
and by expanding and keeping the terms of order $\dif t$ or lower,
\begin{align}
	&p(t,\dif x, y) + \frac{\partial}{\partial t}p(t,\dif x,y)\dif t\label{formulaEquazioneDifferenzialePrimoOrdine}\\
	&=  \lambda(t)\dif t\int_\mathbb{R} p(t,\dif x-z, y)F(\dif z) - \big(1-\lambda(t)\dif t)\big)p(t,\dif x,y) -  g(x)f(t) \frac{\partial}{\partial y}p(t,\dif x, y) +o(\dif t).\nonumber
\end{align}
By dividing by $\dif t$ and taking the limit $\dif t\longrightarrow 0$, equation \eqref{formulaEquazioneDifferenzialePrimoOrdine} turns into
	\begin{align}
	\frac{\partial}{\partial t}p(t, \dif x, y)+ g(x)f(t)& \frac{\partial}{\partial y}p(t, \dif x, y) = \lambda(t) \Big(I-C_F^{(x)}\Big) p(t, \dif x, y) , \ \ \ t\ge0,\ x,y \in\mathbb{R},
\end{align}
where $C_{F}^{(x)} p(t,\dif x, y) =\int_\mathbb{R} p(t,\dif x - z, y) F(\dif z)$.
\\

By suitably adapting the above steps, we derive the following theorem concerning the differential equation governing the $(n+1)$-th dimensional vector describing the compound process and its $n$ iterated integrals.

\begin{theorem}
	Let $T>0$ and $n\in\mathbb{N}$. Let $M\sim CP(\lambda, F)$, measurable real functions $g,f_1,\dots,f_n\in\mathcal{L}^1[0,T]$. For $0\le t\le T$, define
	$$Y_1(t) = \int_0^t g\big(M(s)\big) f_1(s) \dif s,\ \ \ Y_k(t) = \int_0^t Y_{k-1}(s)f_k(s)\dif s,\ k=2,\dots, n. $$
	Then, the probability law $p(t,\dif x, y)\dif y = P\{M(t)\in\dif x, Y_1(t)\in \dif y_1,\dots, Y_n(t)\in \dif y_n\}$ satisfies
	\begin{align}
		\frac{\partial}{\partial t}p(t, \dif x, y)+ g(x)f_1(t)& \frac{\partial}{\partial y_1}p(t, \dif x, y) + \sum_{k=2}^n y_{k-1} f_k(t)\frac{\partial}{\partial y_{k}}p(t, \dif x, y) \nonumber\\
		& = \lambda(t) \Big(I-C_F^{(x)}\Big) p(t, \dif x, y) , \ \ \ t\in [0,T],\ x,y_1,\dots,y_n \in\mathbb{R}, \label{equazioneCompoundPoissonIntegraliIterati}
	\end{align}	
	subject to $p(0,\dif x, y) = \delta_0(\dif x, y)$.
\end{theorem}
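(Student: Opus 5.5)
The plan is to repeat the infinitesimal (forward Kolmogorov) argument carried out above for the pair $(M,Y)$, now applied to the $(n+1)$-dimensional process $(M,Y_1,\dots,Y_n)$. The starting observation is that this vector process is Markov. $M$ has independent increments, and on $[t,t+\dif t)$ the integrals evolve deterministically given the current state. Up to $o(\dif t)$ we have
\begin{align*}
Y_1(t+\dif t)&=Y_1(t)+g\big(M(t)\big)f_1(t)\dif t,\\
Y_k(t+\dif t)&=Y_k(t)+Y_{k-1}(t)f_k(t)\dif t,\qquad k=2,\dots,n.
\end{align*}
I would first justify these expansions. For the $Y_1$ component one uses that $M$ has càdlàg paths and, by Condition II, only finitely many jumps on $[0,T]$, so on the event of no jump in $[t,t+\dif t)$ the integrand $g(M(s))$ is constant. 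The event of a jump has probability $\lambda(t)\dif t+o(\dif t)$, and there the change of $Y_1$ is $O(\dif t)$, which multiplied by $\lambda(t)\dif t$ is $o(\dif t)$. For $k\ge2$, $Y_{k-1}$ is continuous, so the expansion holds pathwise (at Lebesgue points of $f_k$).

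Next I would condition on the state at time $t$ and split according to whether $M$ jumps in $[t,t+\dif t)$, using \eqref{compoundPoissonInfinitesimale}. Writing $y=(y_1,\dots,y_n)$, this gives
\begin{align*}
p(t+\dif t,\dif x,y)={}&\lambda(t)\dif t\int_{\mathbb{R}}p(t,\dif x-z,y)F(\dif z)\\
&+\big(1-\lambda(t)\dif t\big)\,p\big(t,\dif x,\,y_1-g(x)f_1(t)\dif t,\;y_2-y_1f_2(t)\dif t,\dots,\;y_n-y_{n-1}f_n(t)\dif t\big)+o(\dif t).
\end{align*}
In the no-jump term, the shift in the $k$-th coordinate is the drift evaluated at the earlier state. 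This differs from $y_{k-1}f_k(t)\dif t$ only by $O(\dif t^2)$, so replacing it is harmless. In the jump term, the $y$-shifts are $O(\dif t)$ and, multiplied by $\lambda(t)\dif t$, contribute only $o(\dif t)$, so they can be dropped.

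Then I would Taylor expand the no-jump term to first order in the $y$ variables. This produces
\[
-\dif t\Big(g(x)f_1(t)\,\partial_{y_1}p+\sum_{k=2}^n y_{k-1}f_k(t)\,\partial_{y_k}p\Big).
\]
Subtracting $p(t,\dif x,y)$, dividing by $\dif t$ and letting $\dif t\to0$ yields \eqref{equazioneCompoundPoissonIntegraliIterati}. The right-hand side comes out as $\lambda(t)\big(C_F^{(x)}-I\big)p$; I will keep track of the sign carefully, since the paper's display writes $\lambda(t)(I-C_F^{(x)})p$ following its $n=1$ computation. The initial condition is immediate, since $M(0)=Y_k(0)=0$ almost surely.

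The main obstacle is rigour in the Taylor step. It needs $p$ to be differentiable in $y$, whereas the law of $(M,Y)$ has atoms: on $\{N(t)=0\}$ all the $Y_k$ are deterministic (zero when $g(0)=0$). So I would interpret the equation in the distributional or weak sense. Concretely, I would test against smooth compactly supported $\varphi(x,y)$, derive the generator identity
\[
\frac{\dif}{\dif t}\mathbb{E}\varphi
=\mathbb{E}\Big[g(M)f_1\,\partial_{y_1}\varphi+\sum_{k\ge2} Y_{k-1}f_k\,\partial_{y_k}\varphi+\lambda(t)\big(\varphi(M+X,Y)-\varphi(M,Y)\big)\Big],
\]
which holds for a.e. $t$ (Lebesgue points of the $f_k$ and continuity of $\lambda$), and then integrate by parts in $y$ to obtain the stated equation.
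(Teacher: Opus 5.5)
Your proposal is correct and follows the paper's route. The paper obtains this theorem by adapting its infinitesimal computation for the pair $(M,Y)$: split on jump or no jump of $M$ in $[t,t+\dif t)$, then Taylor expand to first order in the $y$-variables. Your test-function formulation is a more rigorous version of that same Taylor step.

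You are also right about the sign. The correct right-hand side is $\lambda(t)\big(C_F^{(x)}-I\big)p=-\lambda(t)\big(I-C_F^{(x)}\big)p$. This is consistent with \eqref{equazioneOperatorialeCompoundPoisson} and with the paper's own fractional version \eqref{equazioneOperatorialeCompoundIntegraliIterati}. The sign in the displayed statement comes from a slip in the paper's $n=1$ expansion, where the no-jump term is written as $-(1-\lambda(t)\dif t)p$ instead of $+(1-\lambda(t)\dif t)p$.
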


Note that the operator on the right-hand side depends on $x$ and $t$ only and it has the same structure of the one in \eqref{equazioneOperatorialeCompoundPoisson}. 

\begin{remark}[Fractional equations and time-changing]
	
We point out that in the homogeneous case, by assuming $g(x)=x,x\in\mathbb{R}, f_1\equiv\dots\equiv f_n\equiv1$ and $T=\infty$, it is easy to derive some results concerning the composition with a subordinator or its inverse. In particular, by applying a Bernstein function $f$ to the right-hand side of \eqref{equazioneCompoundPoissonIntegraliIterati}, using the arguments of point ($ii$) of Theorem \ref{teoremaComposizioneCompoundPoissonConSubordinatore} we readily obtain that the solution is still in the class of the compound Poisson processes and it concerns the process $M\circ \mathcal{H}_f$ and its iterated integrals, where $\mathcal{H}_f$ is a Bernstein subordinator.
\\

On the other hand, following the arguments of Proposition \ref{proposizioneComposizioneInversoSubordinatore}, the probability law $p_f$ of the time-changed vector $(M\circ L_f, Y_1\circ L_f,\dots, Y_n\circ L_f)$ (i.e. it is not the vector concerning $M\circ L_f$ and its iterated integrals), where $L_f$ is an inverse Bernstein subordinator, satisfies the fractional problem, with $t\ge0,\ x,y_1,\dots,y_n\in \mathbb{R}$,
\begin{equation}\label{equazioneOperatorialeCompoundIntegraliIterati}
	\mathcal{D}_t^f p_f(t,\dif x,y) = -\lambda\Big(I-C_F^{(x)}\Big) p_f(t,\dif x,y) - x \frac{\partial}{\partial y_1}p_f(t, \dif x, y) - \sum_{k=2}^n y_{k-1} \frac{\partial}{\partial y_{k}}p_f(t, \dif x, y),
\end{equation}
subject to $p_f(0,\dif x,y) = \delta_0(\dif x,y)$and under the hypotheses $\mathcal{D}_t^f p_f(t,\dif x,y) = \int_0^\infty p(s,\dif x,y) \mathcal{D}_t^f l_f(t,s)\dif s$
and $\int_0^\infty O_{x,y}p(s, \dif x, y)l_f(t,s)\dif s = O_{x,y} p_f(t,\dif x, y)$, with $O_{x,y}$ being the space operator on the right-hand side of \eqref{equazioneOperatorialeCompoundIntegraliIterati}.
\hfill$\diamond$
\end{remark}

\subsubsection{Homogeneous Skellam-type version}

We consider the homogeneous Skellam-type process described in Example \ref{esempioSkellam}, i.e. $M = \sum_{i \in\mathcal{I} } i N_i$, with finite $\mathcal{I}$ and $N_i\sim PP(\lambda_i), \ i\in\mathcal{I}\subset \mathbb{R}\setminus \{0\}$, being independent homogeneous Poisson processes. 

\begin{proposition}
	Let $M$ be a homogeneous Skellam-type process defined above. Then, for $x>0$,
\begin{align}
	P\bigg\{&\int_0^t M(s)\dif s\in \dif x\bigg\} \nonumber\\
	&=  e^{-\lambda t} \sum_{n\ge0} \frac{1}{n!} \sum_{\substack{k_1,\dots,k_I=0\\k_1+\dots+k_I=n}}^n \binom{n}{k_1,\dots, k_I} \prod_{i\in\mathcal{I}} \Bigg( \bigg(\frac{\lambda_i}{i}\bigg)^{k_{(i)}}  \sum_{h_{(i)} = 0}^{k_{(i)}} \binom{k_{(i)}}{h_{(i)}} (-1)^{h_{(i)}} \Bigg)\nonumber \\
	&\ \ \ \times\mathds{1}_{(t\sum_{j\in\mathcal{I}} j h_{(j)} ,\infty)}(x) \frac{\big(x-t\sum_{j\in\mathcal{I}} j h_{(j)}\big)^{n-1}}{(n-1)!}\dif x, \label{probabilitaEsplicitaIntegraleSkemllamGeneralizzato}
\end{align}
where $I = |\mathcal{I}|<\infty$ and, for $i\in\mathcal{I}$, $(i)$ denotes the index of $i$ in the increasingly ordered version of $\mathcal{I}$ (thus, for instance $(\min\{\mathcal{I}\}) = 1$ and $(\max\{\mathcal{I}\}) = I$).
\end{proposition}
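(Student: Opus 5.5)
The plan is to derive the density from the compound Poisson representation of Theorem \ref{teoremaIntegraleProcessoCompostoPoisson} with $\alpha=0$. In the homogeneous case, Remark \ref{remarkIntegraleCompoundOmogeneo} gives
\[
\int_0^t M(s)\dif s \stackrel{d}{=} t\sum_{k=1}^{N(t)} X_k U_k .
\]
Here $N\sim PP(\lambda)$ with $\lambda=\sum_{i\in\mathcal{I}}\lambda_i$. The jumps satisfy $P\{X=i\}=a_i=\lambda_i/\lambda$, and $U_k\sim Unif(0,1)$.

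First I condition on $N(t)=n$, which has probability $e^{-\lambda t}(\lambda t)^n/n!$. Given this, I condition on the multinomial counts $k_1,\dots,k_I$, where $k_{(i)}$ is the number of jumps of size $i$. This event has probability $\binom{n}{k_1,\dots,k_I}\prod_i a_i^{k_{(i)}}$. Conditionally, the integral equals
\[
t\sum_{i\in\mathcal{I}} i\sum_{l=1}^{k_{(i)}}U_{i,l}.
\]
This is a sum of independent uniforms, each on an interval $[0,ti]$ or $[ti,0]$. The factors $\lambda^n$ and $t^n$ combine with the $a_i^{k_{(i)}}$ into $\prod_i\lambda_i^{k_{(i)}}$. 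The remaining $t^{-n}$ should be absorbed by the density scaling, giving an overall factor $\prod_i (\lambda_i/i)^{k_{(i)}}$ once each uniform's $1/(ti)$ density normalisation is accounted for.

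Second, I compute the density of the weighted sum of uniforms by Laplace transform. The uniform on $[0,ti]$ has transform $(1-e^{-\mu t i})/(\mu t i)$; this holds for negative $i$ as well, with the sign carried by $i$ in the denominator. The product over all $n$ uniforms is
\[
\prod_i (\mu t i)^{-k_{(i)}}\bigl(1-e^{-\mu t i}\bigr)^{k_{(i)}} .
\]
Expanding each factor binomially gives
\[
\sum_{h_{(i)}}\binom{k_{(i)}}{h_{(i)}}(-1)^{h_{(i)}}e^{-\mu t i h_{(i)}} .
\]
Collecting the exponentials gives the shift $e^{-\mu t\sum_j j h_{(j)}}$, multiplied by $\mu^{-n}$. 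Inverting term by term, $\mu^{-n}e^{-\mu c}$ corresponds to $\mathds{1}_{(c,\infty)}(x)(x-c)^{n-1}/(n-1)!$ with $c=t\sum_j jh_{(j)}$. This is the classical Irwin--Hall/B-spline formula, which I could cite instead of rederiving.

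Third, I sum over $n$ and over the compositions $k_1+\dots+k_I=n$ to reach \eqref{probabilitaEsplicitaIntegraleSkemllamGeneralizzato}. The powers of $t$ cancel exactly, since $t^n$ from $(\lambda t)^n$ meets $t^{-n}$ from the uniform scalings.

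The main obstacle is rigor in the second step. Term-by-term inversion of a bilateral transform is delicate when some $i<0$, because the support is then not contained in $[0,\infty)$. I would work with the Fourier transform instead, or verify the piecewise-polynomial identity directly by induction on $n$ through convolution. The latter works because the spline identity holds for arbitrary real shifts. A second point is the term $n=0$, an atom at $0$, which is why the statement requires $x>0$. A third is that positive and negative contributions may cancel inside the spline sum, so the identity must be kept as a finite signed sum for each fixed $(n,k)$ before summing; absolute convergence over $n$ follows from the bound $|(x-c)^{n-1}|\le (x+t\max|i|\,n)^{n-1}$ against the factor $1/(n!(n-1)!)$.
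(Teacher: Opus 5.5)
Your proposal is correct and follows essentially the paper's route. The paper also starts from the $\alpha=0$ homogeneous representation $t\sum_{k=1}^{N(t)}Y_kU_k$, uses $\mathbb{E}e^{-\mu t iU}=(1-e^{-\mu t i})/(\mu t i)$, expands multinomially and binomially, and inverts $e^{-\mu H}/\mu^n$ termwise as $\mathds{1}_{(H,\infty)}(x)(x-H)^{n-1}/(n-1)!$. Your conditioning on $N(t)=n$ and on the jump counts is just the paper's power-series expansion of $\exp\bigl(\mu^{-1}\sum_{i}\frac{\lambda_i}{i}(1-e^{-\mu t i})\bigr)$ read probabilistically, and your extra care about the inversion step (restricting to $\mu>0$, or checking the truncated-power identity directly for each fixed $(n,k)$) tightens a point the paper treats only informally.
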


\begin{proof}
Write $M$ in the following way as a compound Poisson process, $M\sim CP( \lambda = \sum_{i\in\mathcal{I}} \lambda_i, Y)$, with $P\{Y = i\} = \lambda_i/\lambda$. Now, Remark \ref{remarkIntegraleCompoundOmogeneo} permits us to easily write the moment generating function of the integral of $M$. Let us also assume  $U\sim Unif(0,1)$ independent of $Y$ and $t>0,\ \mu\in\mathbb{R}$ ($M$ can take real values with positive probability, so we consider the bilateral Laplace transform), then,
\begin{align}
	\mathbb{E}&\exp\bigg( -\mu \int_0^t M(s)\dif s\bigg) \nonumber\\
	& = \exp\bigg(-\lambda t\big(1-\mathbb{E}e^{-\mu t YU}\big)\bigg)\nonumber \\
	& = \exp\Bigg(-\lambda t \bigg(1- \sum_{i\in\mathcal{I}}\frac{\lambda_i}{\lambda}\, \frac{1-e^{-\mu t i}}{\mu t i}\bigg) \Bigg)\nonumber\\
	%& = e^{-\lambda t}\sum_{n\ge0} \frac{1}{n!} \bigg( \sum_{i\in\mathcal{I}} \lambda_i t \text{sgn}(i) \frac{1-e^{-\mu t i}}{\mu}\bigg)^n\nonumber\\
	& = e^{-\lambda t} \sum_{n\ge0} \frac{\mu^{-n}}{n!}  \Bigg( \sum_{i\in\mathcal{I}} \frac{\lambda_i}{i} \Big(1-e^{-\mu t i}\Big)\Bigg)^n\nonumber\\
	& = e^{-\lambda t} \sum_{n\ge0}\frac{\mu^{-n}}{n!}  \sum_{\substack{k_1,\dots,k_I=0\\k_1+\dots+k_I=n}}^n \binom{n}{k_1,\dots, k_I} \prod_{i\in\mathcal{I}} \bigg(\frac{\lambda_i}{i}\bigg)^{k_{(i)}}  \Big( 1-e^{-\mu t i}\Big)^{k_{(i)}}\nonumber\\
	& = e^{-\lambda t} \sum_{n\ge0} \frac{\mu^{-n}}{n!}  \sum_{\substack{k_1,\dots,k_I=0\\k_1+\dots+k_I=n}}^n \binom{n}{k_1,\dots, k_I} \prod_{i\in\mathcal{I}} \bigg(\frac{\lambda_i}{i}\bigg)^{k_{(i)}}  \sum_{h_{(i)} = 0}^{k_{(i)}} \binom{k_{(i)}}{h_{(i)}} \Big(-e^{-\mu t i}\Big)^{h_{(i)}} \nonumber\\
	& = e^{-\lambda t} \sum_{n\ge0} \frac{1}{n!} \sum_{\substack{k_1,\dots,k_I=0\\k_1+\dots+k_I=n}0}^n \binom{n}{k_1,\dots, k_I} \prod_{i\in\mathcal{I}} \bigg(\frac{\lambda_i}{i}\bigg)^{k_{(i)}}  \sum_{h_1 = 0}^{k_1} \binom{k_1}{h_1} (-1)^{h_1}\cdots \sum_{h_I = 0}^{k_I} \binom{k_I}{h_I} (-1)^{h_I} \nonumber\\
	&\ \ \ \times \frac{e^{-\mu t\sum_{j\in\mathcal{I}} jh_{(j)} } }{\mu^n}. \label{trasformataLaplaceIntegraleSkellamGeneralizzato}
\end{align}

Now, to derive the probability density \eqref{probabilitaEsplicitaIntegraleSkemllamGeneralizzato}, in light of the linearity of the (bilateral) Laplace transform, it sufficies to invert the quantity in the last line of \eqref{trasformataLaplaceIntegraleSkellamGeneralizzato}, i.e. a function of the form $e^{-\mu H}/\mu^n, \ n\ge0$. It is easy to prove that its inverse is $g(x) = \mathds{1}_{(H,\infty)}(x) (x-H)^{n-1}/(n-1)!, \ x\in\mathbb{R}$. Indeed, with $\mu\in\mathbb{R},$
\begin{equation}\label{trasformazioneLaplaceFunzionePerLeggeEsplicitaSkellam}
	\int_{-\infty}^\infty e^{-\mu x} \mathds{1}_{(H,\infty)}(x) \frac{(x-H)^{n-1}}{(n-1)!}\dif x = e^{-\mu H}	\int_{-\infty}^0 e^{-\mu y} \frac{y^{n-1}}{(n-1)!}\dif x = \frac{e^{-\mu H}}{\mu^n},
\end{equation}
which completes the proof.
\end{proof}

In the case of $\mathcal{I} = \{1\}$ some calculation yields the distribution of the integral of the homogeneous Poisson process, see for instance Section 3 of \cite{SVw2007}. %Note that formula \eqref{leggeIntegralePoissonOmogeneo} can also be derived by means of an induction argument, see Appendix \cite{dimostrazioneLeggeIntegralePoissonOmogeneoRicorrente}.

\begin{remark}[Skellam process of order $K$]
In the case of $M$ being a Skellam process of order $K>0$ (i.e. $\mathcal{I} = \{-K,\dots,-1,1,\dots,K\}$), formula \eqref{probabilitaEsplicitaIntegraleSkemllamGeneralizzato} simplifies as follows, $x\in\mathbb{R}$,
\begin{align}
	P\bigg\{&\int_0^t M(s)\dif s\in \dif x\bigg\} \nonumber\\
	&=  e^{-\lambda t} \sum_{n\ge0} \frac{t^n}{n!} \sum_{\substack{k_{-K},\dots,k_K=0\\k_{-K}+\dots+k_K = n}}^n \binom{n}{k_{-K},\dots, k_{K}} \prod_{i=1}^K \Bigg( \frac{\lambda_{-i}^{k_{-i}} \lambda_{i}^{k_i}}{i^{k_{-i}+k_i}} \sum_{h_i = 0}^{k_{-1}+k_i} \binom{k_{-i}+k_i}{h_i} (-1)^{(k_{-i}+k_i-h_i)} \Bigg)\nonumber \\
	&\ \ \ \times\mathds{1}_{(t\sum_{j=1}^K j(k_j-h_j) ,\infty)}(x) \frac{\big(x-t\sum_{j=1}^K j(k_j-h_j)\big)^{n-1}}{(n-1)!}\dif x, \label{probabilitaEsplicitaIntegraleSkemllamOrdineK}
\end{align}
see Appendix \ref{appendiceLeggeIntegraleSkellamOrdineK} for the details. Note that the case with $K=1$ appeared in \cite{X2018}.
\end{remark}

%we define the compound Poisson process $M(t) = \sum_{k=1}^{N(t)} X_k$ (which is compound Poisson since Proposition \ref{proposizioneCompostoDiCompostoProcessoConteggio}). Here we derive an explicit formula for the probability distribution of the integral of $M$.

\subsection{Renewal case}

We begin by proving some general results involving the Fourier and the Laplace transforms. These are then applied to derive some results about the integrals of compound renewal processes in the Fourier-Laplace domain.
For suitable $f$ and $\mu>0$, hereafter we use the following notation, $\mathcal{L} \{f\}(\mu) =\mathcal{L} f(\mu)= \int_0^\infty e^{-\mu t} f(t)\dif t$.

\begin{lemma}\label{lemmaTrasformataFourierLaplaceSommeLegateRinnovo}
	Let $N$ be a renewal process with waiting times $X\sim F$ and $\{A_n\}_{n\ge1}$ be a sequence of real random variables independent of $N$ (and its waiting times),
	\begin{itemize}
		\item[($i$)] with $\gamma\in\mathbb{R}$,
		\begin{equation}\label{trasformataCompoundRinnovo}
			\mathbb{E} \exp\Biggl( i\gamma \sum_{k=1}^{N(t)} X_k A_k \Biggr) = \sum_{n\ge0}\mathbb{E} \Biggl[\conv_{k=1}^n \Big( e^{i\gamma A_k\sbullet} F(\sbullet) \Big)\ast 1 (t)- \conv_{k=1}^{n} \Big( e^{i\gamma A_k\sbullet}F(\sbullet) \Big)\ast F \ast 1 (t) \Biggr],
		\end{equation}
		where $hF\ast g (t)= \int_0^t g(t-x)h(x)F(\dif x)$ for suitable functions $h,g$ and distribution $F$.
		\item[($ii$)] If $X$ is absolutely continuous with density $f$ and $\mu>0$,
		\begin{equation}\label{trasformataLaplaceFourierCompoundRinnovo}
			\mathcal{L} \left\{ \mathbb{E} \exp\Biggl( i\gamma \sum_{k=1}^{N(t)} X_k A_k \Biggr) \right\} (\mu) = \frac{1-\mathcal{L}f(\mu)}{\mu}\sum_{n\ge0}\mathbb{E} \prod_{k=1}^n \mathcal{L}f(\mu+i\gamma A_k).
		\end{equation}
	\end{itemize}
\end{lemma}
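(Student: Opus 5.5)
Conditioning on the value of $N(t)$ is the core of the plan. For part ($i$), since $\{N(t)=n\}=\{S_n\le t<S_{n+1}\}$ with $S_n=X_1+\dots+X_n$, I would write
\begin{equation*}
\mathbb{E} \exp\Biggl( i\gamma \sum_{k=1}^{N(t)} X_k A_k \Biggr) = \sum_{n\ge0} \mathbb{E}\Big[ e^{i\gamma\sum_{k=1}^n X_kA_k}\mathds{1}(S_n\le t) - e^{i\gamma\sum_{k=1}^n X_kA_k}\mathds{1}(S_{n+1}\le t)\Big],
\end{equation*}
using $\mathds{1}(S_n\le t<S_{n+1}) = \mathds{1}(S_n\le t)-\mathds{1}(S_{n+1}\le t)$. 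Exchanging sum and expectation is harmless because the terms are bounded by $P\{N(t)=n\}$ in modulus, and these probabilities are summable (here I use that a renewal process does not explode, which also fits Condition II).

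Next I would condition on $A_1,\dots,A_n$, which is legitimate because the $A_k$ are independent of the waiting times. Given the $A$'s, the variables $X_1,\dots,X_n$ are i.i.d. with law $F$. Hence the conditional expectation of $e^{i\gamma\sum_k X_kA_k}\mathds{1}(S_n\le t)$ equals the integral of $\prod_k e^{i\gamma A_k x_k}$ over $\{x_1+\dots+x_n\le t\}$ with respect to $F(\dif x_1)\cdots F(\dif x_n)$.

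The key observation is that this integral is exactly the iterated convolution $\sconv_{k=1}^n\big(e^{i\gamma A_k\sbullet}F(\sbullet)\big)\ast 1(t)$ in the notation $hF\ast g$ of the statement. I would verify this by induction on $n$, integrating out $x_n$ last. The second term works the same way: the extra $X_{n+1}$ carries no exponential weight, so it contributes one more convolution with $F$, giving $\sconv_{k=1}^n(\cdots)\ast F\ast 1(t)$. Taking expectation over the $A$'s then yields \eqref{trasformataCompoundRinnovo}. The case $n=0$ gives $1-F(t)$, consistent with empty convolutions.

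For part ($ii$), I would take the Laplace transform of each term. With $F(\dif x)=f(x)\dif x$, the function $e^{i\gamma A_k x}f(x)$ has Laplace transform $\mathcal{L}f(\mu-i\gamma A_k)$. The sign depends on the orientation convention, so I would match it to the statement's $\mathcal{L}f(\mu+i\gamma A_k)$ by replacing $\gamma$ with $-\gamma$ or by checking the convention carefully; this sign bookkeeping is a point to watch. Using $\mathcal{L}1=1/\mu$ and the convolution theorem, the two terms give $\prod_k\mathcal{L}f(\mu\mp i\gamma A_k)/\mu$ and $\prod_k\mathcal{L}f(\mu\mp i\gamma A_k)\,\mathcal{L}f(\mu)/\mu$. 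Factoring gives $(1-\mathcal{L}f(\mu))/\mu$ times the product.

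The main obstacle is justifying the exchange of the Laplace transform with both $\sum_n$ and $\mathbb{E}$. I would bound each summand's modulus by $\int_0^\infty e^{-\mu t}P\{N(t)=n\}\dif t$ before taking moduli inside. More simply, $|\mathcal{L}f(\mu+i\gamma A_k)|\le\mathcal{L}f(\mu)<1$, so the series is dominated by the geometric series $\sum_n(\mathcal{L}f(\mu))^n$, which converges for $\mu>0$. Fubini then applies.
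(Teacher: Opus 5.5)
Your route is the paper's own. The paper also conditions on $\{N(t)=n\}$ and writes the expectation as an iterated integral in $x_1,\dots,x_n$ carrying the factor $P\{X_{n+1}>t-x_1-\dots-x_n\}=1-\int_0^{t-x_1-\dots-x_n}F(\dif x_{n+1})$, which is exactly your identity $\mathds{1}(S_n\le t<S_{n+1})=\mathds{1}(S_n\le t)-\mathds{1}(S_{n+1}\le t)$. It then reads off the two convolutions and Laplace-transforms term by term using the convolution theorem. Your domination bounds supply the exchanges that the paper leaves implicit: $\sum_n P\{N(t)=n\}=1$ in ($i$), and $\int_0^\infty e^{-\mu t}P\{S_n\le t\}\dif t=(\mathcal{L}f(\mu))^n/\mu$ with $\mathcal{L}f(\mu)<1$ in ($ii$).

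The one point you should not leave open is the sign in ($ii$), and your suggested remedy does not work. Replacing $\gamma$ by $-\gamma$ also changes the left-hand side, so it cannot turn one formula into the other.

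Your computation is the right one. With the paper's convention $\mathcal{L}f(\mu)=\int_0^\infty e^{-\mu t}f(t)\dif t$ one has $\int_0^\infty e^{-\mu x}e^{i\gamma A_k x}f(x)\dif x=\mathcal{L}f(\mu-i\gamma A_k)$. So the argument, yours and the paper's alike, proves ($ii$) with $\mathcal{L}f(\mu-i\gamma A_k)$ in place of $\mathcal{L}f(\mu+i\gamma A_k)$.

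The printed right-hand side is the complex conjugate of the correct one and is false in general. Take $A_k\equiv 1$ and $f(x)=\lambda e^{-\lambda x}$, and use $t-S_{N(t)}\stackrel{d}{=}\min\{E,t\}$ with $E\sim\mathrm{Exp}(\lambda)$. The transform of the left-hand side is then $\frac{\lambda+\mu-i\gamma}{(\lambda+\mu)(\mu-i\gamma)}$, which is what the minus-sign formula gives. The stated formula instead gives $\frac{\lambda+\mu+i\gamma}{(\lambda+\mu)(\mu+i\gamma)}$. The paper's proof makes the same slip when it transforms $e^{i\gamma A_k x}f(x)$.

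So state and prove ($ii$) with $\mu-i\gamma A_k$. Equivalently, keep $\mu+i\gamma A_k$ but put $e^{-i\gamma\sum_k X_kA_k}$ on the left. The printed version holds only in special cases, e.g.\ when $(A_1,\dots,A_n)\stackrel{d}{=}(-A_1,\dots,-A_n)$ for every $n$.
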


\begin{proof}
($i$) Let $\mathcal{A}_n = \text{Supp}(A_1)\times \dots \times\text{Supp}(A_n)$, then, 
\begin{align*}
	\mathbb{E} &\exp\Biggl( i\gamma \sum_{k=1}^{N(t)} X_k A_k \Biggr) \\
	&= \sum_{n\ge0} P\{N(t) = n \}\int_\mathcal{A} P\{A_1\in \dif a_1,\dots, A_n\in\dif a_n\}\int_0^t e^{i\gamma a_1 x_1} F(\dif x_1)\int_0^{t-x_1} e^{i\gamma a_2x_2}F(\dif x_2)\cdots \\
	& \ \ \ \times \cdots \int_0^{t-x_1-\dots - x_{n-1}} e^{i\gamma a_n x_n}P\{X_{n+1}>t-x_1-\dots-x_n\}F(\dif x_n)\\
	& = \sum_{n\ge0} \int_\mathcal{A} P\{A_1\in \dif a_1,\dots, A_n\in\dif a_n\}\int_0^t e^{i\gamma a_1x_1} F(\dif x_1)\int_0^{t-x_1} e^{i\gamma a_2x_2}F(\dif x_2)\cdots \\
	& \ \ \ \times \cdots \int_0^{t-x_1-\dots - x_{n-1}}e^{i\gamma a_n x_n}F(\dif x_n)\left(1-\int_0^{t-x_1-\dots-x_n}F(\dif x_{n+1})\right)
\end{align*}
which coincides with \eqref{trasformataCompoundRinnovo}.

($ii$) Assuming that $X$ has density $f$,
\begin{align*}
		\mathcal{L} \Bigg\{& \mathbb{E} \exp\Biggl( i\gamma \sum_{k=1}^{N(t)} X_k A_k \Biggr) \Bigg\} (\mu) \\
		&= \sum_{n\ge0}\mathbb{E} \Biggl[ \mathcal{L}\left\{ \conv_{k=1}^n \Big( e^{i\gamma A_k\sbullet}F(\sbullet) \Big)\ast 1 \right\}(\mu) - \mathcal{L}\left\{ \conv_{k=1}^n \Big( e^{i\gamma A_k\sbullet} F(\sbullet)\Big)\ast F\ast 1 \right\}(\mu)\Biggr]\\
		& =  \sum_{n\ge0}\mathbb{E} \left[ \frac{1}{\mu} \prod_{k=1}^n \mathcal{L}f(\mu+i\gamma A_k) - \frac{\mathcal{L}f(\mu)}{\mu} \prod_{k=1}^n \mathcal{L}f(\mu+i\gamma A_k) \right],
\end{align*}
which coincides with \eqref{trasformataLaplaceFourierCompoundRinnovo}.
\end{proof}

%Lemma \ref{lemmaTrasformataFourierLaplaceSommeLegateRinnovo} yields the following statement.
Now we obtain the following results for the integral of a compound renewal process.

\begin{theorem}\label{teoremaTrasformateIntegraleCompoundRinnovo}
	Let $M\sim CR(X,Y)$. 
	\begin{itemize}
		\item[($i$)] With $\gamma \in\mathbb{R}$ and $H_Y(\gamma) = \mathbb{E}e^{i\gamma Y}$,
		\begin{align}
			\mathbb{E}& \exp\left( i\gamma \int_0^t M(s)  \dif s\right ) \nonumber\\
			&= \sum_{n\ge0} \mathbb{E}  \Biggl[\conv_{k=1}^n \Big(  e^{i\gamma \sum_{h=k}^nY_h\sbullet}F(\sbullet)\Big)\ast 1 (t)- \conv_{k=1}^{n} \Big( e^{i\gamma \sum_{h=k}^nY_h\sbullet}F(\sbullet) \Big)\ast F \ast 1 (t) \Biggr] \label{fourierIntegraleCompoundRinnovoOsservandoX} \\
			& = \sum_{n\ge0} \mathbb{E} \Bigg[ \mathds{1}(X> t-S_n ) \prod_{k=1}^n H_Y\big(\gamma (t-S_k)\big)\Bigg] . \label{fourierIntegraleCompoundRinnovoOsservandoY}
		\end{align}
		\item[($ii$)] If $X$ is absolutely continuous with density $f$ and $\mu>0$,
		\begin{align}
		\int_0^t e^{-\mu t} \mathbb{E} \exp\Bigg( i\gamma\bigg(t\sum_{k=1}^{N(t)} Y_k &- \int_0^t M(s)  \dif s\bigg) \Bigg) \dif t = \frac{1-\mathcal{L}f(\mu)}{\mu}\sum_{n\ge0}\mathbb{E} \prod_{k=1}^n \mathcal{L}f\bigg(\mu+i\gamma \sum_{h=k}^n Y_k\bigg). \label{fourierLaplaceIntegraleCompoundRinnovo}
	\end{align}
	\end{itemize}
\end{theorem}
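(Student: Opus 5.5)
The plan is to reduce everything to the pathwise representation \eqref{formulazioneGeneraleIntegraleProcessoCompound} with $f\equiv1$, which gives
\[
\int_0^t M(s)\dif s=\sum_{k=1}^{N(t)}Y_k(t-S_k),
\]
where $S_k=X_1+\dots+X_k$ are the renewal epochs. The main step is to rewrite this as a sum over the waiting times. Since $t-S_k=(t-S_{N(t)})+\sum_{j=k+1}^{N(t)}X_j$ for $k\le N(t)$, summing by parts gives
\[
\int_0^t M(s)\dif s=t\sum_{k=1}^{N(t)}Y_k-\sum_{j=1}^{N(t)}X_j\sum_{h=j}^{N(t)}Y_h .
\]
Here we used $\sum_{k\le N(t)}Y_k(t-S_k)=tM(t)-\sum_k Y_kS_k$ and $\sum_k Y_kS_k=\sum_j X_j\sum_{h\ge j}Y_h$.

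For ($ii$), the quantity $t\sum_{k\le N(t)}Y_k-\int_0^tM(s)\dif s$ therefore equals $\sum_{j=1}^{N(t)}X_jA_j^{(N(t))}$ with $A_j^{(n)}=\sum_{h=j}^nY_h$. The weights depend on $n=N(t)$, so Lemma \ref{lemmaTrasformataFourierLaplaceSommeLegateRinnovo} cannot be applied verbatim. I would redo its proof on each event $\{N(t)=n\}$: condition on the $Y$'s (independent of $N$), freeze the weights $a_1,\dots,a_n$ as constants, and obtain the $n$-th term of \eqref{trasformataCompoundRinnovo} with $A_k$ replaced by $\sum_{h=k}^nY_h$. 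Taking the Laplace transform in $t$ turns the convolutions into products, $\mathcal{L}\{e^{i\gamma a x}f(x)\}(\mu)=\mathcal{L}f(\mu-i\gamma a)$, up to the sign convention of the statement, and the survival factor contributes $(1-\mathcal{L}f(\mu))/\mu$. Summing over $n$ and applying Fubini, justified by absolute convergence since $|\mathcal{L}f(\mu+i\cdot)|\le\mathcal{L}f(\mu)<1$, gives \eqref{fourierLaplaceIntegraleCompoundRinnovo}.

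For ($i$), formula \eqref{fourierIntegraleCompoundRinnovoOsservandoY} follows by conditioning on $N(t)=n$ and $S_1,\dots,S_n$ in the first representation and using the independence of the $Y_k$, exactly as in \eqref{funzioneCaratteristicaIntegraleGenerale}. The event $\{N(t)=n\}$ equals $\{S_n\le t<S_n+X_{n+1}\}$, which produces the indicator $\mathds{1}(X>t-S_n)$ with $S_n\le t$ implicit. For \eqref{fourierIntegraleCompoundRinnovoOsservandoX}, I would write $e^{i\gamma\int_0^tM}$ on $\{N(t)=n\}$ as $e^{i\gamma tM(t)}e^{-i\gamma\sum_jX_jA_j^{(n)}}$ and apply the same frozen-weight computation. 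I would then absorb the factor $e^{i\gamma t\sum Y}$ by writing $t=\sum_{j\le n}X_j+(t-S_n)$. This makes the weights become $\sum_{h<k}Y_h$-type combinations; alternatively the last-interval term $(t-S_n)$ is integrated against the survival function. The convolution convention $hF\ast g$ of the lemma should reproduce the displayed form after matching indices.

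I expect the main obstacle to be this bookkeeping: keeping the dependence of the weights on $n$ straight (which forbids a direct citation of the lemma) and getting the signs and indices of the partial sums $\sum_{h=k}^nY_h$ to match the statement exactly. The interchange of the sum over $n$, the expectation, and the Laplace integral is routine.
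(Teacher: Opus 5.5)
Your decomposition $\int_0^tM(s)\dif s=tM(t)-\sum_{j\le N(t)}X_j\sum_{h=j}^{N(t)}Y_h$ is correct. So is your treatment of (ii), where you rerun the proof of Lemma \ref{lemmaTrasformataFourierLaplaceSommeLegateRinnovo} on each event $\{N(t)=n\}$; the paper simply cites the lemma, tacitly using that its proof tolerates weights depending on $n$. Your derivation of \eqref{fourierIntegraleCompoundRinnovoOsservandoY} via \eqref{funzioneCaratteristicaIntegraleGenerale} matches the paper's route. You are also right on two further points: $\mathcal{L}\{e^{i\gamma a x}f(x)\}(\mu)=\mathcal{L}f(\mu-i\gamma a)$, and $S_n\le t$ must be imposed in \eqref{fourierIntegraleCompoundRinnovoOsservandoY}.

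The step that fails is the first equality in (i). No matching of indices can make your computation reproduce \eqref{fourierIntegraleCompoundRinnovoOsservandoX}, because that expression is not the characteristic function of $\int_0^tM(s)\dif s$. Unwinding $hF\ast g(t)=\int_0^tg(t-x)h(x)F(\dif x)$, its $n$-th term is $\mathbb{E}\big[e^{i\gamma\sum_{k=1}^nX_k\sum_{h=k}^nY_h}\mathds{1}(N(t)=n)\big]$. The series is therefore the characteristic function of $\sum_{k\le N(t)}Y_kS_k=tM(t)-\int_0^tM(s)\dif s$, the same variable as in (ii); that is all the paper's appeal to Lemma \ref{lemmaTrasformataFourierLaplaceSommeLegateRinnovo}($i$) actually establishes. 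Your manipulation $t=S_n+(t-S_n)$ instead yields a correct but different formula:
\[
\mathbb{E}e^{i\gamma\int_0^tM(s)\dif s}=\sum_{n\ge0}\mathbb{E}\int_{x_1+\dots+x_n\le t}\prod_{k=1}^n e^{i\gamma x_k\sum_{h<k}Y_h}F(\dif x_k)\,e^{i\gamma(t-x_1-\dots-x_n)\sum_{h\le n}Y_h}\big(1-F(t-x_1-\dots-x_n)\big).
\]
It has weights $\sum_{h<k}Y_h$ and an exponential factor on the survival term.

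A concrete check shows the displayed equality is false. Take $X\equiv1$ and $t=3/2$. Then $N(t)=1$ and $\int_0^tM(s)\dif s=Y_1/2$, so the left-hand side and \eqref{fourierIntegraleCompoundRinnovoOsservandoY} both equal $H_Y(\gamma/2)$. However, \eqref{fourierIntegraleCompoundRinnovoOsservandoX} equals $H_Y(\gamma)$, which differs already for $Y\equiv1$. The two laws do coincide for exponential waiting times, by time reversal of the arrival times given $N(t)=n$, but not for a general renewal process.

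So rather than asserting that the bookkeeping will close, state what is actually provable. Formula \eqref{fourierIntegraleCompoundRinnovoOsservandoY} holds for $\int_0^tM$. Formula \eqref{fourierIntegraleCompoundRinnovoOsservandoX} holds for $tM(t)-\int_0^tM$, or equivalently the display above holds for $\int_0^tM$. Note also that the first equality of (i), as written, does not hold in general.
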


\begin{proof}
	Points ($i$) and ($ii$) of Lemma \ref{lemmaTrasformataFourierLaplaceSommeLegateRinnovo} respectively yield forumla \eqref{fourierIntegraleCompoundRinnovoOsservandoX} and formula \eqref{fourierLaplaceIntegraleCompoundRinnovo}.
	Formula \eqref{fourierIntegraleCompoundRinnovoOsservandoY} follows by means of \eqref{funzioneCaratteristicaIntegraleGenerale} and by considering that in the renewal case $P\{X_1\in \dif x_1, \dots, X_n\in \dif x_n, N(t)=n\} = P(X>t-x_1-\dots-x_n) \prod_{k=1}^n P(X_k\in\dif x_k),\ n\ge1$.
\end{proof}

We point out that Theorem \ref{teoremaTrasformateIntegraleCompoundRinnovo} generalizes Theorem 4.1 of \cite{SVw2007}. Indeed, if $Y=1$ a.s. formula \eqref{fourierLaplaceIntegraleCompoundRinnovo} concerns the integral of the renewal process $N$ and it coincides with equation (7) of \cite{SVw2007}.

%\section{Maximum likelihood inference}

%%%%%%%%%%%%%%%%%%%%%%%%%%

%\subsection*{\large{Declarations}}
%
%\textbf{Ethical Approval.} This declaration is not applicable.
% \\
%\textbf{Competing interests.}  The authors have no competing interests to declare.
%\\
%\textbf{Authors' contributions.} Both authors equally contributed in the preparation and the writing of the paper.
% \\
%\textbf{Funding.} The authors received no funding.
% \\
%\textbf{Availability of data and materials.} This declaration is not applicable.

%\subsection*{Acknowledgments}

% ---------------------------------------------------------------------------------------------------------

%% The Appendices part is started with the command \appendix;
%% appendix sections are then done as normal sections

 \appendix

 \section{Proof of formulas \eqref{formulazioneGeneraleIntegraleProcessoTempoContinuo} and \eqref{formulazioneGeneraleIntegraleProcessoCompound}}\label{appendiceCalcoloEsplicitoIntegrali}
 Formula \eqref{formulazioneGeneraleIntegraleProcessoTempoContinuo}.
\begin{align*}
	\int_0^t X_{N(s)} f(s)\dif s &=  \sum_{n=0}^\infty  X_n \int_{0}^t \mathds{1}\big(N(s) =n\big) f(s)\dif s \\
	&= \sum_{n=0}^\infty  X_n \int_{0}^t \mathds{1}\big(S_n \le s < S_{n+1}\big) f(s)\dif s \\
	&= \sum_{n=0}^\infty  X_n \int_{\min\{S_n, t\}}^{\min\{t, S_{n+1}\}} f(s)\dif s \\
	&= \sum_{n=0}^{N(t)-1}  X_n \int_{S_n}^{S_{n+1}} f(s)\dif s + X_{N(t)} \int_{S_{N(t)}}^t f(s)\dif s,
\end{align*}
where in the first step we exchanged series and integral in light of Condition II.

Formula \eqref{formulazioneGeneraleIntegraleProcessoCompound}.
\begin{align*}
	\int_0^t  \sum_{k=1}^{N(s)} Y_k f(s)\dif s &=  \sum_{n=0}^{N(t)-1}  \sum_{k=1}^{n} Y_k \int_{S_n}^{S_{n+1}} f(s)\dif s + \sum_{k=1}^{N(t)} Y_k \int_{S_{N(t)}}^t f(s)\dif s \\
	& = \sum_{k=1}^{N(t)-1} Y_k  \int_{S_k}^{S_{N(t)}} f(s)\dif s  + \sum_{k=1}^{N(t)} Y_k \int_{S_{N(t)}}^t f(s)\dif s\\
	& = \sum_{k=1}^{N(t)} Y_k   \int_{S_k}^t f(s)\dif s
\end{align*}
where in the last equation we used that in the first sum we can add the term $k=N(t)$ since it is equal to $0$.

%\section{Induction proof of Formula \eqref{leggeIntegralePoissonOmogeneoRicorrente}}\label{dimostrazioneLeggeIntegralePoissonOmogeneoRicorrente}

\section{Proof of Lemma \ref{lemmaIncrementiIntegraleProcesso}}\label{appendiceDimostrazioneIncrementiIntegrale}
($i$) For $\gamma\in\mathbb{R}$ and $0\le t_1<t_2\le T$,
\begin{align}
	\mathbb{E}e^{i\gamma \big(I(t_2)-I(t_1)\big)}  &=  \mathbb{E} \exp\Bigg( i\gamma \int_{t_1}^{t_2} \Big(X(s)\pm X(t_1) \Big) \dif s \Bigg) \nonumber\\
	& = \mathbb{E} \exp\Bigg( i\gamma \int_{t_1}^{t_2} \Big(X(s)- X(t_1) \Big) \dif s  +  i\gamma (t_2 - t_1) X(t_1)\Bigg) \nonumber
\end{align}
and the independence of the increments yields \eqref{incrementoIntegraleProcessoStocasticoIncrementiIndipendenti}.
\\
Point ($ii$) readily follows from ($i$) by observing that $\{X(s+t_1)-X(t_1)\}_{s\ge0} \stackrel{d}{=}X$ on $\mathcal{D}[0,T]$ because of ($ii$) of Lemma \ref{lemmaUguaglianzaProcessiSpazioFunzioniCadlag}. Hence, the integral of the increment process is equal in distribution to the integral of $X$.

\section{Proof of Formula \eqref{probabilitaEsplicitaIntegraleSkemllamOrdineK}}\label{appendiceLeggeIntegraleSkellamOrdineK}
We recover the steps of the calculation of the bilateral Laplace transform in \eqref{trasformataLaplaceIntegraleSkellamGeneralizzato} which can be written as follows, with $\mu\in\mathbb{R}$,
\begin{align}
	\mathbb{E}&\exp\bigg( -\mu \int_0^t M(s)\dif s\bigg) \nonumber\\
	%& = e^{-\lambda t}\sum_{n\ge0} \frac{1}{n!} \bigg( \sum_{i\in\mathcal{I}} \lambda_i t \text{sgn}(i) \frac{1-e^{-\mu t i}}{\mu}\bigg)^n\nonumber\\
	& = e^{-\lambda t} \sum_{n\ge0} \frac{\mu^{-n}}{n!}  \Bigg( \sum_{\substack{i=-K\\ i\not=0}}^K \frac{\lambda_i}{i} \Big(1-e^{-\mu t i}\Big)\Bigg)^n\nonumber\\
	& = e^{-\lambda t} \sum_{n\ge0} \frac{1}{n!} \bigg(\frac{t}{\mu}\bigg)^n \Bigg(\sum_{i=-1}^{-K} \frac{\lambda_i}{|i|} (e^{-\mu t i}-1)  +\sum_{i=1}^K \frac{\lambda_i}{i} \frac{e^{\mu t i}-1}{e^{\mu t i}}\Bigg)^n \nonumber\\
	& =  e^{-\lambda t} \sum_{n\ge0} \frac{1}{n!} \bigg(\frac{t}{\mu}\bigg)^n \sum_{\substack{k_{-K},\dots,k_K=0\\k_{-K}+\dots+k_K = n}}^n  \binom{n}{k_{-K},\dots,k_K} \prod_{i=1}^K \frac{\lambda_{-i}^{k_{-i}} \lambda_i^{k_i}}{i^{k_{-i}+k_i}} \frac{(e^{\mu t i}-1)^{k_{-i}+k_i}}{e^{\mu t i k_i}}\nonumber\\ 
	& = e^{-\lambda t} \sum_{n\ge0} \frac{1}{n!} \bigg(\frac{t}{\mu}\bigg)^n \sum_{\substack{k_{-K},\dots,k_K=0\\k_{-K}+\dots+k_K = n}}^n \Bigg(\prod_{i=1}^K \frac{\lambda_{-i}^{k_{-i}}\lambda_{i}^{k_i}}{i^{k_{-i}+k_i}}\Bigg) \sum_{h_1 = 0}^{k_{-1}+k_1}\binom{k_{-1}+k_1}{h_1}\cdots \sum_{h_K=0}^{k_{-K} + k_K} \binom{k_{-K} + k_K}{h_K} \nonumber\\
	& \ \ \ (-1)^{\sum_{j=1}^K(k_{-j}+k_j-h_j)}\times e^{-\mu t \sum_{j=1}^K j (k_j - h_j)}. \label{trasformataLaplaceIntegraleSkellamOrdineK}
\end{align}
By means of \eqref{trasformazioneLaplaceFunzionePerLeggeEsplicitaSkellam} we obtain that the inverse of \eqref{trasformataLaplaceIntegraleSkellamOrdineK} is \eqref{probabilitaEsplicitaIntegraleSkemllamOrdineK}.
% \section{}
% \label{}

%% For citations use: 
%%       \cite{<label>} ==> Jones et al. [21]
%%       \citep{<label>} ==> [21]
%%

%% If you have bibdatabase file and want bibtex to generate the
%% bibitems, please use
%%
%%  \bibliographystyle{elsarticle-num-names} 
%%  \bibliography{<your bibdatabase>}

%% else use the following coding to input the bibitems directly in the
%% TeX file.
\footnotesize{

}

\end{document}